\newtheorem{theorem}{Theorem}[section]
\newtheorem{corollary}{Corollary}[section]
\newtheorem{lemma}{Lemma}[section]
\newtheorem{definition}{Definition}[section]
\theoremstyle{definition} \newtheorem{remark}{Remark}[section]
\theoremstyle{definition} \newtheorem{example}{Example}[section]
\theoremstyle{definition} \newtheorem{question}{Question}[section]
\begin{document}
\title{The Homology Groups of Zero Divisor Graphs of Finite Commutative Rings}
\author[1]{Fenglin Li\\ \href{mailto: fenglin125@126.com}{fenglin125@126.com}}
\affil[1]{Department of Information Technology, Zhejiang Financial College}
\date{\today}
\maketitle
\abstract{This paper investigates the homology groups of the clique complex associated with the zero-divisor graph of a finite commutative ring. Generalizing the construction introduced by F. R. DeMeyer and L. DeMeyer, we establish a K\"unneth-type formula for the homology of such complexes and provide explicit computations for products of finite local rings. As a notable application, we obtain a general method to determine the clique homology groups of $\mathbb{Z}_n$ and related ring products. Furthermore, we derive explicit formulas for the Betti numbers when all local factors are fields or non-fields. A complete classification of when this clique complex is Cohen–Macaulay is given, with the exception of one borderline case. Finally, our results yield a partial answer to a question posed in earlier literature, showing that certain topological spaces, such as the Klein bottle and the real projective plane, cannot be realized as zero-divisor complexes of finite commutative rings.}
\footnote{supported by the Fundamental Research Funds for the Provincial Universities of Zhejiang(Grant Number: 2025PY01)}
\footnote{Keywords: zero-divisor graphs, simplicial homology groups, join operation, finite commutative rings}

\section{Introduction}
In this paper, a graph is an undirected simple graph and a ring is a unitary commutative ring. The study of zero-divisor graphs was initiated by Beck \cite{beck1988coloring}, who defined the graph $G(R)$ for a commutative ring $R$ with $1 \neq 0$ to have all elements of $R$ as vertices, where two distinct vertices $a$ and $b$ are adjacent if $ab = 0$. Anderson and Livingston \cite{AL99} later introduced a refinement, the graph $\Gamma(R)$, defined as the induced subgraph of $G(R)$ on the set of nonzero zero-divisors $Z(R)^* = Z(R) \setminus \{0\}$. The interplay between the ring structure of $R$ and the graph structure of $\Gamma(R)$ has since been extensively studied \cite{AL99, AB08, red06}.

This concept extends naturally to commutative semigroups $S$ \cite{DMS02}, and was further developed by DeMeyer and DeMeyer \cite{demeyer05}, who introduced the simplicial complex $K(S)$—the clique complex of $\Gamma(S)$. Recall that the clique complex $\Delta(G)$ of a graph $G$ is the simplicial complex whose simplices are the cliques of $G$. By a slight abuse of notation, we write $K(R)$ for the clique complex of $\Gamma(R)$. The primary objective of this paper is to conduct a systematic investigation of the simplicial homology groups of $K(R)$ for finite commutative rings $R$. We also utilize the complex $K_0(S)$, where a simplex is a subset $A \subseteq S \setminus \{0\}$ such that $xy = 0$ for all distinct $x, y \in A$.

Computing the homology of such zero-divisor complexes presents a fundamental challenge whose tractability depends crucially on the underlying algebraic structure. For certain highly symmetric cases, such as chessboard complexes (arising from products of fields) or matching complexes, the homology can be attacked via group actions and combinatorial representation theory. However, for a general commutative semigroup, zero-divisors lack a uniform description, and no such global symmetry is available; consequently, a universal homological formula appears out of reach.

The case of finite commutative rings occupies a privileged intermediate position. The Chinese Remainder Theorem provides a canonical decomposition into local rings, endowing the set of zero-divisors with a rich yet rigid product structure. It is precisely this rigidity that makes a complete homological analysis possible. For a finite commutative ring $\Omega$, the classical structure theorem yields an isomorphism $\Omega \cong R_1 \times R_2 \times \cdots \times R_k$, where each $R_i$ is a finite local ring. Our first main contribution is a K\"unneth-type formula that expresses the homology group $H_n(K(\Omega))$ in terms of data derived from the local factors $R_i$. The structure of $K(\Omega)$ is intimately related to the join operation of simplicial complexes. In the simplest case, $K(F_1 \times F_2)$ is the complete bipartite graph $K_{u_1, u_2}$, whose clique complex is well-understood. To handle the general case, we introduce the following key construction:

\begin{definition} \label{main-def}
Let $A$ be a subcomplex of a simplicial complex $K$ and $L$ be another simplicial complex. The join of $L$ and $K$ over $A$ is a simplicial complex denoted by $K\triangledown_{A}L$, whose vertex set is the disjoint union of the vertex sets of $K$ and $L$. The simplices of $K\triangledown_{A}L$ are those of $K$, those of $L$, and the disjoint unions of simplices of $A$ with simplices of $L$.
\end{definition}

This definition provides an efficient recursive description of $K(\Omega)$, which serves as the foundation for our homological computations.

In Section 2, we review necessary background in simplicial homology and establish our primary technical tool (Theorem \ref{tool2}), which describes the homology of complexes of the form $X \triangledown_A \overline{K}_r$. Its proof leverages the K\"unneth theorem and the Mayer–Vietoris sequence.

Section 3 is devoted to our central result, Theorem \ref{main}, which gives an explicit, recursive formula for the homology groups $\tilde{H}_n(K(\Omega))$. This theorem allows us to compute these groups inductively. As a first application, we obtain a partial answer to a question posed in \cite{demeyer05}, showing that neither the Klein bottle nor the real projective plane can be realized as $K(\Omega)$ for any finite commutative ring $\Omega$. Furthermore, we derive explicit formulas for the Betti numbers in the cases where all local factors are fields, and where none of them are fields.

In Section 4, we apply our results to two classification problems. First, we provide a near-complete characterization of the rings $\Omega$ for which $K(\Omega)$ is a Cohen–Macaulay complex, with only one borderline case remaining open. Second, we prove that $K(\Omega)$ cannot be a triangulation of any compact surface, thus strengthening the aforementioned topological obstructions.

Finally, in Section 5, we discuss natural open questions, including the extension of our results to finite noncommutative rings and the complete classification of the Cohen–Macaulay property in the remaining case.

\section{Preliminaries}
We first review some definitions and results of simplicial homology theory that will be used in this paper. \par

An abstract simplicial complex $K$ is a vertex set $\{ v_{\alpha}\}$ and a family of its subsets, called simplices, such that any subset of a simplex must be a simplex. A set of $k+1$ vertices is a $k-$dimensional simplex. A subcomplex $L$ of $K$ is an abstract simplicial complex such that every simplex of $L$ belong to $K$. The formal sum $\sum a_i \Delta_i^k$ is called an $n-$chain, where $a_i$ is an integer and $\Delta_i^k$ is a simplex of dimension $k$. The group of $k-$chains is denoted by $C_k(K)$. Assume that all of simplices of $K$ are oriented, i.e., the vertices of each simplex are ordered. A simplex with vertices with order $a_0,a_1,\ldots,a_n$ is denoted by $[a_0,a_1,\ldots,a_n]$. We define the boundary of a simplex as
\[ \partial[0,1,\ldots,n]=\sum(-1)^i[0,\ldots,\hat{i},\ldots,n], \]
where $[0,\ldots,\hat{i},\ldots,n]=[0,\ldots,i-1,i+1,\ldots,n]$. Extending $\partial$ by linearity, we obtain a manp $\partial_k\colon C_k(K)\rightarrow C_{k-1}(K)$. Since $\partial\partial=0$, we have in fact defined a complex of abelian groups
\[ \cdots\rightarrow C_k(K)\rightarrow C_{k-1}(K)\rightarrow\cdots\rightarrow C_0(K)\rightarrow 0. \]
The quotient group $H_k(K)=\mathrm{ker}\partial_k / \mathrm{im}\partial_{k+1}$ is called the $k-$dimensional simplicial homology group of the complex $K$. \par

If we replace the map $\partial_0\colon C_0(K)\rightarrow0$ by $\epsilon\colon C_0(K)\rightarrow\mathbb{Z}$, where $\epsilon(v)=1$ for each vertex $v$, then we obtain a new chain complex. The homology group of this chain complex is called the reduced homology group of $K$, and is denoted by $\tilde{H}_k(K)$. In this paper, we mainly use the reduced homology group.

\begin{theorem}[K\"unneth Relation \cite{CE99}]\label{kun}
Let $L_*$ and $M_*$ be complexes of abelian groups. Then we have an exact sequence
\[ 0\rightarrow \sum_{i+j=n}H_{i}(L_*)\otimes H_{j}(M_*)\rightarrow H_{n}(L_{*}\otimes M_{*})\rightarrow\sum_{i+j=n-1}\mathrm{Tor}_{1}(H_{i}(L_*),H_{j}(M_*))\rightarrow 0. \]
\end{theorem}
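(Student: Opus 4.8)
The plan is to reduce the statement to a long exact homology sequence arising from a short exact sequence of chain complexes, exploiting the fact that over $\mathbb{Z}$ (a principal ideal domain) every subgroup of a free abelian group is again free, hence flat. Although the theorem is phrased for arbitrary complexes of abelian groups, for the clean two-term conclusion to hold one needs a flatness hypothesis; since in the intended application the simplicial chain groups are free, I would first fix the standing assumption that $L_*$ is degreewise flat, which is precisely what licenses the computation below.

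First I would introduce the cycle and boundary subcomplexes $Z_i=\ker\partial_i$ and $B_i=\mathrm{im}\,\partial_{i+1}$ of $L_*$, each equipped with the zero differential. The differential of $L_*$ factors as $L_n\twoheadrightarrow B_{n-1}\hookrightarrow L_{n-1}$, so $\partial$ defines a surjective chain map $L_*\to B_*[-1]$ with kernel $Z_*$, where $B_*[-1]$ is the boundary complex shifted up by one degree and given the zero differential. This yields the short exact sequence of complexes
\[ 0\rightarrow Z_*\rightarrow L_*\rightarrow B_*[-1]\rightarrow 0. \]
Because $B_{n-1}$ is a subgroup of the free group $L_{n-1}$, it is free and hence flat, so tensoring with $M_*$ preserves exactness and produces
\[ 0\rightarrow Z_*\otimes M_*\rightarrow L_*\otimes M_*\rightarrow B_*[-1]\otimes M_*\rightarrow 0. \]

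Next I would pass to the associated long exact homology sequence. Since $Z_*$ and $B_*$ carry the zero differential and are degreewise flat, tensoring commutes with homology, giving $H_n(Z_*\otimes M_*)=\bigoplus_{i+j=n}Z_i\otimes H_j(M_*)$ and $H_n(B_*[-1]\otimes M_*)=\bigoplus_{i+j=n-1}B_i\otimes H_j(M_*)$. The crucial—and most delicate—step is to identify the connecting homomorphism of the long exact sequence with the map induced by the inclusion $\iota\colon B_i\hookrightarrow Z_i$, namely $\bigoplus(\iota\otimes\mathrm{id})$; I would verify this by unwinding the definition of the connecting map through an explicit element chase, and I expect it to be the main obstacle. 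Granting this identification, the long exact sequence breaks into short exact sequences exhibiting $H_n(L_*\otimes M_*)$ as an extension of $\ker(\iota\otimes\mathrm{id})$ in the index range $i+j=n-1$ by $\mathrm{coker}(\iota\otimes\mathrm{id})$ in the range $i+j=n$.

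Finally I would compute these kernels and cokernels from the defining short exact sequences $0\rightarrow B_i\rightarrow Z_i\rightarrow H_i(L_*)\rightarrow 0$. Tensoring each with $H_j(M_*)$ and using $\mathrm{Tor}_1(Z_i,-)=0$ (flatness of $Z_i$), the resulting Tor sequence collapses to
\[ 0\rightarrow \mathrm{Tor}_1(H_i(L_*),H_j(M_*))\rightarrow B_i\otimes H_j(M_*)\xrightarrow{\ \iota\otimes\mathrm{id}\ } Z_i\otimes H_j(M_*)\rightarrow H_i(L_*)\otimes H_j(M_*)\rightarrow 0. \]
Reading off $\ker(\iota\otimes\mathrm{id})=\mathrm{Tor}_1(H_i(L_*),H_j(M_*))$ and $\mathrm{coker}(\iota\otimes\mathrm{id})=H_i(L_*)\otimes H_j(M_*)$, and summing over the appropriate index ranges, assembles exactly the claimed short exact sequence. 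Beyond the bookkeeping, the only genuine conceptual point is the identification of the connecting map; everything else is a flatness-driven diagram chase.
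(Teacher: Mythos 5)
The paper offers no proof of this statement at all---it is quoted as a known result with a citation to Cartan--Eilenberg---so there is nothing internal to compare your argument against; what you have written is the classical textbook proof, and it is correct. The route you take (the short exact sequence $0\to Z_*\to L_*\to B_*[-1]\to 0$ of complexes, tensoring with $M_*$ and preserving exactness by flatness of the boundary groups, identifying the connecting homomorphism of the resulting long exact sequence with the map induced by the inclusions $B_i\hookrightarrow Z_i$, and then reading off its kernel and cokernel from the four-term Tor sequence of $0\to B_i\to Z_i\to H_i(L_*)\to 0$) is exactly the standard one, and the step you flag as delicate does go through: a cycle $b\otimes[m]$ of $B_*[-1]\otimes M_*$ lifts to $l\otimes m$ with $\partial l=b$, whose differential is $b\otimes m$ (up to sign) viewed in $Z_*\otimes M_*$, so the connecting map is indeed $\pm(\iota\otimes\mathrm{id})$, and signs do not affect kernels or cokernels. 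Two remarks. First, your observation that the statement as printed is false for arbitrary complexes of abelian groups is correct and worth emphasizing: taking both $L_*$ and $M_*$ to be $\mathbb{Z}/2$ concentrated in degree $0$, one has $H_1(L_*\otimes M_*)=0$ while $\mathrm{Tor}_1(\mathbb{Z}/2,\mathbb{Z}/2)=\mathbb{Z}/2$, so the claimed sequence cannot be exact; some freeness or flatness hypothesis on one factor is required. This hypothesis is harmless for the paper, since every complex to which Theorem~\ref{kun} is applied there is a (shifted) simplicial chain complex, hence degreewise free. Second, a minor inconsistency in your write-up: you announce the standing assumption ``$L_*$ is degreewise flat'' but then argue ``$B_{n-1}$ is a subgroup of the \emph{free} group $L_{n-1}$, hence free.'' Either hypothesis suffices---over $\mathbb{Z}$, subgroups of free abelian groups are free, and subgroups of flat (equivalently, torsion-free) abelian groups are flat---but you should fix one and use it consistently, since the flat case needs the torsion-free characterization rather than the subgroup-of-free argument.
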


\begin{theorem}\label{tool1}
Let $G_1$ and $G_2$ be simple graphs. Suppose that $\tilde{H}_*(\Delta(G_1))$ or $\tilde{H}_*(\Delta(G_2))$ is free, then we have
\[ \tilde{H}_n(\Delta(G_1 \triangledown G_2))\cong \sum_{i+j=n-1}\tilde{H}_i(\Delta(G_1))\otimes \tilde{H}_j(\Delta(G_2)).\]
\end{theorem}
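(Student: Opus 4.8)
The plan is to reduce the statement to the classical K\"unneth relation (Theorem \ref{kun}) by first identifying the clique complex of a graph join with a simplicial join, and then realizing the reduced chain complex of that join as a shifted tensor product of reduced chain complexes.

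First I would observe that $\Delta(G_1 \triangledown G_2) = \Delta(G_1) * \Delta(G_2)$, the simplicial join. Since in $G_1 \triangledown G_2$ every vertex of $G_1$ is adjacent to every vertex of $G_2$, a vertex set $W$ is a clique if and only if $W \cap V(G_1)$ is a clique of $G_1$ and $W \cap V(G_2)$ is a clique of $G_2$. Equivalently, every simplex of $\Delta(G_1 \triangledown G_2)$ is a disjoint union $\sigma \sqcup \tau$ with $\sigma \in \Delta(G_1)$ and $\tau \in \Delta(G_2)$ (either factor possibly empty), which is precisely the description of the join complex.

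Next I would work with the augmented (reduced) chain complexes $\tilde{C}_*(\Delta(G_i))$, using the convention that the empty simplex is the unique generator in degree $-1$, so that these complexes compute $\tilde{H}_*$. Under the dimension convention $\dim \emptyset = -1$, a join simplex $\sigma \sqcup \tau$ has dimension $\dim \sigma + \dim \tau + 1$, so that
\[ \tilde{C}_n(\Delta(G_1)*\Delta(G_2)) \;\cong\; \bigoplus_{i+j=n-1} \tilde{C}_i(\Delta(G_1)) \otimes \tilde{C}_j(\Delta(G_2)). \]
Fixing a total vertex order in which every vertex of $G_1$ precedes every vertex of $G_2$, I would define a map $\sigma \sqcup \tau \mapsto \sigma \otimes \tau$, correcting it by a degree-dependent sign so that it becomes an isomorphism of chain complexes exhibiting $\tilde{C}_*(\Delta(G_1)*\Delta(G_2))$ as the tensor-product complex $\tilde{C}_*(\Delta(G_1)) \otimes \tilde{C}_*(\Delta(G_2))$ shifted up by one degree. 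The crux is verifying the chain-map property: the join boundary satisfies $\partial(\sigma \sqcup \tau) = (\partial\sigma)\sqcup\tau + (-1)^{\dim\sigma+1}\,\sigma\sqcup(\partial\tau)$, whereas the Leibniz differential on the tensor product carries the sign $(-1)^{\dim\sigma}$; reconciling these requires the degree shift to absorb the sign discrepancy. This sign bookkeeping is where I expect the main work to lie, even though the resulting identity (reduced chains of a join equal the shifted tensor product of reduced chains) is standard.

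Finally, with the chain isomorphism established, I would apply Theorem \ref{kun} to $L_* = \tilde{C}_*(\Delta(G_1))$ and $M_* = \tilde{C}_*(\Delta(G_2))$, together with the degree shift $\tilde{H}_n(\Delta(G_1 \triangledown G_2)) \cong H_{n-1}(L_* \otimes M_*)$, to obtain the short exact sequence
\[ 0 \to \bigoplus_{i+j=n-1}\tilde{H}_i(\Delta(G_1))\otimes\tilde{H}_j(\Delta(G_2)) \to \tilde{H}_n(\Delta(G_1 \triangledown G_2)) \to \bigoplus_{i+j=n-2}\mathrm{Tor}_1\!\left(\tilde{H}_i(\Delta(G_1)),\tilde{H}_j(\Delta(G_2))\right) \to 0. \]
Invoking the hypothesis that $\tilde{H}_*(\Delta(G_1))$ or $\tilde{H}_*(\Delta(G_2))$ is free, every $\mathrm{Tor}_1$ term vanishes, so the right-hand group is zero and the left arrow is an isomorphism, which is exactly the claimed formula.
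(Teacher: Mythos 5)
Your proposal is correct and follows essentially the same route as the paper: identify $\Delta(G_1 \triangledown G_2)$ with the simplicial join $\Delta(G_1) * \Delta(G_2)$, realize its reduced chain complex as a degree-shifted tensor product of the reduced chain complexes, and apply the K\"unneth relation (Theorem \ref{kun}). You are in fact slightly more careful than the paper, which never explicitly states that the freeness hypothesis kills the $\mathrm{Tor}_1$ terms in the K\"unneth sequence --- a step your proposal spells out.
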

\begin{proof}
Consider the augmentation complex $\tilde{C}_* (\Delta(G_1 \triangledown G_2))$. In the join $G_1 \triangledown G_2$, each vertex of $G_1$ is adjacent to each vertex of $G_2$. Then an $n-$simplex of $\Delta(G_1 \triangledown G_2)$ can be identified with an $i-$simplex of $\Delta(G_1)$ and a $j-$simplex of $\Delta(G_2)$, where $i+j=n-1$, here we view a $(-1)-$simplex as an empty set. Given a left complex $L_*$, we write $L_*[k]$ for the shift of $L_*$ with degree $k$, where $L_n[k]=L_{n-k}$ and $d_{n}[1]=(-1)^k d_{n-k}$. By the above argument, we have
\[ \tilde{C}_*(\Delta(G_1 \triangledown G_2))\cong(\tilde{C}_*(\Delta(G_1))[1]\otimes \tilde{C}_*(\Delta(G_2))[1])[-1]. \]
Applying the K\"unneth theorem (Theorem \ref{kun}) to the tensor product of the shifted complexes, we obtain:
\begin{align*}
\tilde{H}_n(\Delta(G_1 \triangledown G_2)) &\cong H_{n+1}(\tilde{C}_*(\Delta(G_1))[1]\otimes \tilde{C}_*(\Delta(G_2))[1]) \\
 &\cong\sum_{i+j=n+1}H_i(\tilde{C}_*(\Delta(G_1))[1])\otimes H_j(\tilde{C}_*(\Delta(G_2))[1]) \\
 &=\sum_{i+j=n+1}H_{i-1}(\tilde{C}_*(\Delta(G_1))\otimes H_{j-1}(\tilde{C}_*(\Delta(G_2)) \\
 &=\sum_{i+j=n-1}\tilde{H}_i(\Delta(G_1))\otimes \tilde{H}_j(\Delta(G_2)).
\end{align*}
The first isomorphism follows from the identification of the chain complex, the second from Theorem \ref{kun}, and the last equality is a reindexing.
\end{proof}

\begin{corollary}
Let $A$ be a simplical complex. Then
\[ \tilde{H}_n(A\triangledown\overline{K}_r)\cong[\tilde{H}_{n-1}(A)]^{r-1}. \]
\end{corollary}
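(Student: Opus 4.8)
The plan is to specialize the join formula of Theorem \ref{tool1} to the case where the second factor is the empty graph $\overline{K}_r$ on $r$ vertices. First I would identify $\Delta(\overline{K}_r)$: since $\overline{K}_r$ has no edges, its only cliques are single vertices, so $\Delta(\overline{K}_r)$ is the discrete complex consisting of $r$ disjoint points. Its reduced homology is concentrated in degree zero, with $\tilde{H}_0(\Delta(\overline{K}_r)) \cong \mathbb{Z}^{r-1}$ and $\tilde{H}_j(\Delta(\overline{K}_r)) = 0$ for $j \neq 0$. In particular this homology is free, so the hypothesis of Theorem \ref{tool1} is automatically satisfied by the factor $\overline{K}_r$, whatever $A$ may be.

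Next I would feed this into the formula
\[ \tilde{H}_n(A \triangledown \overline{K}_r) \cong \sum_{i+j=n-1} \tilde{H}_i(A) \otimes \tilde{H}_j(\Delta(\overline{K}_r)). \]
Because $\tilde{H}_j(\Delta(\overline{K}_r))$ vanishes unless $j=0$, every term with $j \geq 1$ drops out and the sum collapses to the single summand with $i = n-1$, $j = 0$. This yields
\[ \tilde{H}_n(A \triangledown \overline{K}_r) \cong \tilde{H}_{n-1}(A) \otimes \mathbb{Z}^{r-1} \cong [\tilde{H}_{n-1}(A)]^{r-1}, \]
where the last isomorphism is the standard fact that tensoring an abelian group with $\mathbb{Z}^{r-1}$ produces the direct sum of $r-1$ copies of it.

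The one point that needs care — and the main obstacle — is that Theorem \ref{tool1} is stated for clique complexes $\Delta(G_1), \Delta(G_2)$ of graphs, whereas here $A$ is an arbitrary simplicial complex, which need not be flag. I would resolve this by observing that the proof of Theorem \ref{tool1} never uses the flag property: the essential input is the chain-level identification $\tilde{C}_*(A \triangledown \overline{K}_r) \cong (\tilde{C}_*(A)[1] \otimes \tilde{C}_*(\Delta(\overline{K}_r))[1])[-1]$ of the reduced chain complex of a simplicial join, which holds for any pair of simplicial complexes. Thus the K\"unneth argument applies verbatim with the roles of $G_1$ and $G_2$ played by $A$ and $\Delta(\overline{K}_r)$, and the freeness of $\tilde{H}_*(\Delta(\overline{K}_r))$ supplies exactly the hypothesis needed to kill the $\mathrm{Tor}$ term in Theorem \ref{kun}. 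Beyond this, the computation is entirely routine.
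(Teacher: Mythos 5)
Your proposal is correct and takes essentially the same route as the paper: specialize Theorem \ref{tool1} to the second factor $\overline{K}_r$, whose reduced homology is free and concentrated in degree $0$ with rank $r-1$, so the K\"unneth sum collapses to the single term $\tilde{H}_{n-1}(A)\otimes\mathbb{Z}^{r-1}$. Your additional observation --- that Theorem \ref{tool1} applies even though $A$ need not be a clique complex, because its proof only uses the chain-level identification of the join --- is a careful patch of a point the paper's one-line proof silently glosses over.
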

\begin{proof}
Note that $\tilde{H}_n(\overline{K}_r)=0$ for $n\neq0$ and $\tilde{H}_0(\overline{K}_r)\cong\mathbb{Z}^{r-1}$.
\end{proof}

\begin{corollary}
Let $G$ be the complete $r-$partite graph $K_{m_1,m_2,\ldots,m_r}$. Then
\[ \tilde{H}_{r-1}(\Delta(G))\cong\mathbb{Z}^{(m_1 -1)(m_2 -1)\cdots(m_r -1)}, \]
and $\tilde{H}_n(\Delta(G))=0$ if $n\neq r-1$.
\end{corollary}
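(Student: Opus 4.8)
The plan is to argue by induction on the number of parts $r$, exploiting the fact that a complete multipartite graph is nothing but an iterated join of edgeless graphs. The two structural observations I would record first are: (i) $K_{m_1,\ldots,m_r} = \overline{K}_{m_1}\triangledown\cdots\triangledown\overline{K}_{m_r}$, since within each part there are no edges while every pair of vertices in distinct parts is adjacent; and (ii) passing to clique complexes converts a graph join into a simplicial join, i.e.\ $\Delta(G_1\triangledown G_2)=\Delta(G_1)\triangledown\Delta(G_2)$ --- a fact already implicit in the identification of chains used in the proof of Theorem \ref{tool1}. Combining these gives the recursive identity $\Delta(K_{m_1,\ldots,m_r}) = \Delta(K_{m_1,\ldots,m_{r-1}})\triangledown\overline{K}_{m_r}$, which is exactly the shape to which the first corollary applies.

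For the base case $r=1$ the graph is the discrete graph $\overline{K}_{m_1}$, whose clique complex is $m_1$ isolated points; here $\tilde{H}_0\cong\mathbb{Z}^{m_1-1}$ and all higher reduced homology vanishes, which matches the claimed formula for $r=1$. For the inductive step I would assume the statement for $r-1$ parts, so that $\tilde{H}_n(\Delta(K_{m_1,\ldots,m_{r-1}}))$ equals $\mathbb{Z}^{(m_1-1)\cdots(m_{r-1}-1)}$ when $n=r-2$ and vanishes otherwise. Applying the first corollary with $A=\Delta(K_{m_1,\ldots,m_{r-1}})$ and the edgeless complex $\overline{K}_{m_r}$ yields
\[ \tilde{H}_n(\Delta(K_{m_1,\ldots,m_r}))\cong\big[\tilde{H}_{n-1}(\Delta(K_{m_1,\ldots,m_{r-1}}))\big]^{m_r-1}. \]
The right-hand side is nonzero precisely when $n-1=r-2$, i.e.\ $n=r-1$, at which point it equals $\big[\mathbb{Z}^{(m_1-1)\cdots(m_{r-1}-1)}\big]^{m_r-1}\cong\mathbb{Z}^{(m_1-1)\cdots(m_r-1)}$, completing the induction.

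The step requiring the most care is verifying that the hypotheses of Theorem \ref{tool1} (and hence of the first corollary) are genuinely satisfied at every stage, namely that one of the two factors has free reduced homology. This is automatic here because the inductive hypothesis delivers reduced homology that is free abelian ($\mathbb{Z}^k$ concentrated in a single degree) at each step, so no $\mathrm{Tor}$ terms ever appear and the join formula reduces to a clean tensor product. The only other bookkeeping to watch is the degree shift: each join raises the degree carrying the unique nonzero homology by exactly one, which is what pushes the concentration degree from $r-2$ up to $r-1$ and accounts for the growth of the rank by the factor $m_r-1$.
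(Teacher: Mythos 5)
Your proof is correct and follows essentially the same route as the paper: the paper's (very terse) proof likewise writes $K_{m_1,\ldots,m_r}=\overline{K}_{m_1}\triangledown\cdots\triangledown\overline{K}_{m_r}$ and inducts on $r$ via the preceding corollary on joins with $\overline{K}_r$. Your write-up simply supplies the details the paper leaves implicit (the base case, the degree shift, and the freeness hypothesis of Theorem \ref{tool1}), all of which check out.
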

\begin{proof}
Note that $K_{m_1,m_2,\ldots,m_r}=\overline{K}_{m_1}\triangledown\overline{K}_{m_2}\triangledown\cdots\triangledown\overline{K}_{m_r}$, and the conclusion follows by induction.
\end{proof}

\begin{corollary}\label{cor3}
Let $F_1$ and $F_2$ be finite fields. Set $u_1=|F_1^*|$ and $u_2=|F_2^*|$. Then
\[ \tilde{H}_1(K(F_1 \times F_2))\cong\mathbb{Z}^{(u_1 -1)(u_2 -1)}, \]
and $\tilde{H}_n(K(F_1 \times F_2))=0$ if $n\neq 1$.
\end{corollary}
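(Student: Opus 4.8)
The plan is to prove this as an immediate specialization of the preceding corollary on complete $r$-partite graphs, once the zero-divisor graph $\Gamma(F_1 \times F_2)$ has been identified explicitly. The entire content of the statement will then follow by setting $r = 2$, so the real work is purely the ring-theoretic identification of the graph.

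First I would describe $Z(R)^*$ for $R = F_1 \times F_2$. Since $F_1$ and $F_2$ are fields, an element $(a,b) \in R$ is a unit exactly when $a \neq 0$ and $b \neq 0$; hence the zero-divisors are precisely the $(a,b)$ with $a = 0$ or $b = 0$. Deleting $(0,0)$, the set $Z(R)^*$ is the disjoint union of $V_1 = \{(a,0) : a \in F_1^*\}$, of size $u_1$, and $V_2 = \{(0,b) : b \in F_2^*\}$, of size $u_2$. Next I would check adjacencies. For two vertices $(a_1,0),(a_2,0) \in V_1$ the product is $(a_1 a_2, 0)$, which is nonzero because $F_1$ is a field and $a_1,a_2 \neq 0$; thus $V_1$ is independent, and by symmetry so is $V_2$. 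Conversely $(a,0)\cdot(0,b) = (0,0)$ for every $a \in F_1^*$ and $b \in F_2^*$, so each vertex of $V_1$ is adjacent to each vertex of $V_2$. Therefore $\Gamma(F_1 \times F_2)$ is exactly the complete bipartite graph $K_{u_1,u_2}$, and consequently $K(F_1 \times F_2) = \Delta(K_{u_1,u_2})$.

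With this identification in hand, I would invoke the corollary on complete $r$-partite graphs with $r = 2$, $m_1 = u_1$, and $m_2 = u_2$. It gives $\tilde{H}_1(\Delta(K_{u_1,u_2})) \cong \mathbb{Z}^{(u_1-1)(u_2-1)}$ and $\tilde{H}_n(\Delta(K_{u_1,u_2})) = 0$ for $n \neq 1$, which is precisely the assertion. I do not expect any genuine obstacle here: the homological substance is entirely supplied by the earlier corollary, and the only point requiring care is the verification that the field hypothesis forces the bipartite structure, namely that products of two distinct first-coordinate (respectively second-coordinate) vertices never vanish. Once that elementary observation is recorded, the proof is complete.
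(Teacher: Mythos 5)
Your proposal is correct and follows exactly the paper's route: the paper's entire proof is the observation that $K(F_1\times F_2)=K_{u_1,u_2}$ followed by an appeal to the preceding corollary on complete $r$-partite graphs, and you have simply written out the ring-theoretic verification of that identification in full detail.
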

\begin{proof}
Observe that $K(F_1 \times F_2)=K_{u_1,u_2}$.
\end{proof}

\begin{theorem}[Mayer Vietoris \cite{pra07}]\label{mv}
Suppose that $K$ is a simplicial complex, $K_0$ and $K_1$ are subcomplexes of $K$ such that $K=K_0 \cup K_1$, and $L=K_0 \cap K_1$. Then there is an exact sequence
\[ \cdots\rightarrow H_k(L)\rightarrow H_k(K_0)\oplus H_k(K_1)\rightarrow H_k(K)\rightarrow H_{k-1}(L)\rightarrow\cdots . \]
\end{theorem}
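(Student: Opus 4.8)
The plan is to deduce this long exact sequence from a short exact sequence of simplicial chain complexes together with the standard zig-zag (snake) lemma, exactly as in the classical argument. The simplicial setting makes the construction cleaner than the singular one, since no subdivision or excision is needed: because $K$, $K_0$, $K_1$, and $L$ share the same ambient vertex set, each of $C_k(K_0)$, $C_k(K_1)$, $C_k(L)$ is canonically a subgroup of $C_k(K)$, freely generated by the oriented $k$-simplices lying in the respective complex.

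First I would write down the two natural maps. Let $\phi\colon C_*(L)\to C_*(K_0)\oplus C_*(K_1)$ be $\phi(c)=(c,-c)$, induced by the inclusions $L\hookrightarrow K_0$ and $L\hookrightarrow K_1$, and let $\psi\colon C_*(K_0)\oplus C_*(K_1)\to C_*(K)$ be $\psi(a,b)=a+b$, induced by $K_0\hookrightarrow K$ and $K_1\hookrightarrow K$. Both are chain maps commuting with the boundary operators, and the sign in $\phi$ guarantees $\psi\circ\phi=0$.

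Next I would verify that
\[ 0\longrightarrow C_*(L)\xrightarrow{\ \phi\ } C_*(K_0)\oplus C_*(K_1)\xrightarrow{\ \psi\ } C_*(K)\longrightarrow 0 \]
is exact. Injectivity of $\phi$ is immediate, since $(c,-c)=0$ forces $c=0$. Surjectivity of $\psi$ uses the hypothesis $K=K_0\cup K_1$: every simplex of $K$ belongs to $K_0$ or to $K_1$, so every generator of $C_k(K)$ is hit. The only nonformal point is exactness in the middle: if $\psi(a,b)=a+b=0$ in $C_*(K)$, then $a=-b$ is a chain supported simultaneously on simplices of $K_0$ and of $K_1$, hence on simplices of $K_0\cap K_1=L$; thus $a\in C_*(L)$ and $(a,b)=\phi(a)$. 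This step relies on the identity $C_*(K_0)\cap C_*(K_1)=C_*(L)$ inside $C_*(K)$, which holds precisely because the chain groups are free on the simplices.

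Finally, applying the long exact homology sequence associated to a short exact sequence of chain complexes to the sequence above yields exactly the claimed sequence, with connecting homomorphism $\partial\colon H_k(K)\to H_{k-1}(L)$ given by the usual diagram chase: lift a cycle of $K$ through $\psi$, take its boundary, and pull the result back through $\phi$. I expect the middle-exactness verification, i.e.\ confirming $C_*(K_0)\cap C_*(K_1)=C_*(L)$, to be the only genuinely delicate step; everything else is formal homological algebra. The same argument applies verbatim to the reduced chain complexes once one checks $L\neq\emptyset$ (or augments consistently), giving the reduced Mayer--Vietoris sequence used in the sequel.
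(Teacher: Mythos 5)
The paper offers no proof of this statement: it is imported verbatim from Prasolov \cite{pra07} as a standard result, so there is no internal argument to compare against. Your proof is the classical one---the short exact sequence $0\to C_*(L)\to C_*(K_0)\oplus C_*(K_1)\to C_*(K)\to 0$ followed by the zig-zag lemma---and it is correct; in particular you rightly identify middle exactness (i.e.\ $C_*(K_0)\cap C_*(K_1)=C_*(L)$ inside $C_*(K)$, which holds because simplicial chain groups are free on the simplices) as the only nonformal step, and your caveat that the reduced version (the one actually used in Theorem \ref{tool2}) requires $L\neq\emptyset$ or a consistent augmentation is exactly the right point of care.
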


\begin{theorem}\label{tool2}
Let $A$ be a subcomplex of a simplicial complex $X$. Suppose the natural homomorphism $\tilde{H}_*(A)\rightarrow\tilde{H}_*(X)$ is zero and $\tilde{H}_*(A)$ is free, then we have
\[ \tilde{H}_n(X\triangledown_{A}\overline{K}_r)\cong\tilde{H}_n(X)\oplus[\tilde{H}_{n-1}(A)]^r .\]
\end{theorem}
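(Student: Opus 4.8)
The plan is to run the Mayer--Vietoris sequence (Theorem \ref{mv}) on a natural two-piece cover of $Y := X \triangledown_A \overline{K}_r$, show that the inclusion-induced maps force the long exact sequence to collapse into short exact sequences, and then use the freeness hypothesis to split them.

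First I would choose the subcomplexes $K_0 = X$ and $K_1 = A \triangledown \overline{K}_r$, the ordinary join, which is the union of the $r$ cones over the new vertices $v_1,\dots,v_r$ together with $A$. Unwinding Definition \ref{main-def}, every simplex of $Y$ is a simplex of $X$, one of the new vertices $v_i$, or of the form $\sigma \cup \{v_i\}$ with $\sigma \in A$; the first kind lies in $K_0$ and the last two in $K_1$, so $K_0 \cup K_1 = Y$. Next I would check that a simplex lying in both pieces has all its vertices in $V(A)$ (since the $v_i \notin V(X)$) and is therefore a simplex of $A$, giving $K_0 \cap K_1 = A$. This produces the reduced sequence
\[ \cdots \to \tilde H_k(A) \xrightarrow{(i_*,\,j_*)} \tilde H_k(X) \oplus \tilde H_k(A \triangledown \overline{K}_r) \to \tilde H_k(Y) \xrightarrow{\partial} \tilde H_{k-1}(A) \to \cdots, \]
where $i\colon A \hookrightarrow X$ and $j\colon A \hookrightarrow A \triangledown \overline{K}_r$ are the two inclusions.

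The crux is showing the leftmost map is zero for every $k$. Its first component $i_*$ vanishes by hypothesis. For $j_*$ I would observe that $j$ factors through the cone $A \triangledown \overline{K}_1$ on $A$ with apex $v_1$, a subcomplex of $A \triangledown \overline{K}_r$; since this cone is contractible its reduced homology vanishes, and hence $j_* = 0$ as well. As both components vanish, the map $(i_*, j_*)$ is zero regardless of the sign convention in Mayer--Vietoris, so the long exact sequence breaks into short exact sequences
\[ 0 \to \tilde H_k(X) \oplus \tilde H_k(A \triangledown \overline{K}_r) \to \tilde H_k(Y) \xrightarrow{\partial} \tilde H_{k-1}(A) \to 0. \]

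Finally, because $\tilde H_{k-1}(A)$ is free by hypothesis, each of these sequences splits, yielding $\tilde H_k(Y) \cong \tilde H_k(X) \oplus \tilde H_k(A \triangledown \overline{K}_r) \oplus \tilde H_{k-1}(A)$. Substituting the value $\tilde H_k(A \triangledown \overline{K}_r) \cong [\tilde H_{k-1}(A)]^{r-1}$ from the earlier corollary absorbs the extra summand into the claimed $[\tilde H_{k-1}(A)]^{r}$, completing the identification $\tilde H_n(X \triangledown_A \overline{K}_r) \cong \tilde H_n(X) \oplus [\tilde H_{n-1}(A)]^{r}$. I expect the genuinely delicate points to be the combinatorial bookkeeping that establishes $K_0 \cap K_1 = A$ and $K_0 \cup K_1 = Y$ directly from the definition, and the clean verification that $j_* = 0$ via the contractible cone; the freeness assumption enters exactly once, to guarantee that the short exact sequences split.
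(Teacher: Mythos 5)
Your proposal is correct and follows essentially the same route as the paper: the same Mayer--Vietoris cover $\{X,\ A \triangledown \overline{K}_r\}$ with intersection $A$, the same collapse of the long exact sequence into short exact sequences once the map out of $\tilde{H}_*(A)$ is shown to vanish, and the same splitting via freeness of $\tilde{H}_*(A)$ together with the identification $\tilde{H}_n(A \triangledown \overline{K}_r) \cong [\tilde{H}_{n-1}(A)]^{r-1}$. The only difference is how you verify that $\tilde{H}_*(A) \to \tilde{H}_*(A \triangledown \overline{K}_r)$ is zero: you factor the inclusion through the contractible cone $A \triangledown \overline{K}_1$, whereas the paper factors the induced map through $\tilde{H}_*(A) \otimes \tilde{H}_{-1}(\overline{K}_r) = 0$ using the K\"unneth identification of Theorem \ref{tool1}; your cone argument is more elementary and equally valid.
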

\begin{proof}
By Theorem \ref{mv}, we have an exact sequence
\[ \tilde{H}_n(A)\rightarrow\tilde{H}_n(X)\oplus\tilde{H}_n(A\triangledown\overline{K}_r)\rightarrow\tilde{H}_n(X\triangledown_{A}\overline{K}_r)\rightarrow\tilde{H}_{n-1}(A)\rightarrow\tilde{H}_{n-1}(X)\oplus\tilde{H}_{n-1}(A\triangledown\overline{K}_r). \]
Using Theorem \ref{kun} and Theorem \ref{tool1}, we can see that the map $\tilde{H}_n(A)\rightarrow\tilde{H}_n(A\triangledown\overline{K}_r)$ factors through $\tilde{H}_n(A)\rightarrow\tilde{H}_n(A)\otimes\tilde{H}_{-1}(\overline{K}_r)\rightarrow\tilde{H}_n(A\triangledown\overline{K}_r)$. Therefore the natural homomorphism $\tilde{H}_n(A)\rightarrow\tilde{H}_n(A\triangledown\overline{K}_r)$ is zero. Since we assume that $\tilde{H}_*(A)\rightarrow\tilde{H}_*(X)$ is zero, the above exact sequence reduces to a short exact sequence
\[ 0\rightarrow\tilde{H}_n(X)\oplus\tilde{H}_n(A\triangledown\overline{K}_r)\rightarrow\tilde{H}_n(X\triangledown_{A}\overline{K}_r)\rightarrow\tilde{H}_{n-1}(A)\rightarrow0. \]
By assumption the $\tilde{H}_*(A)$ is free, this short exact sequnce is split and conclusion follows.
\end{proof}

\section{Main Theorems}

In this section, $\Omega$ will denote a finte commutative ring, and $\Omega\cong R_1 \times R_2 \times\cdots\times R_k$, where $R_i$ is a finite local ring. We use $\mathfrak{m}_i$ to denote the maximal ideal of $R_i$. Let $v_i$ be the least positive integer such that $\mathfrak{m}_i^{v_i}= 0$. We use $U(R)$ to denote the set of units of a ring $R$ and we write $|U(R_i)|=u_i$.

\begin{lemma}
Let $R$ be a finite local ring. Then $\tilde{H}_n(K(R))=0$.
\end{lemma}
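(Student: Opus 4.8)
The plan is to prove the stronger statement that $K(R)$ is contractible by exhibiting it as a cone, after which the vanishing of every reduced homology group is automatic. First I would record the ring-theoretic input. Since $R$ is a finite commutative local ring, every element is either a unit or a zero-divisor, and the non-units are precisely the maximal ideal; hence the set of nonzero zero-divisors is $Z(R)^* = \mathfrak{m} \setminus \{0\}$, which is exactly the vertex set of $\Gamma(R)$. The degenerate case $\mathfrak{m} = 0$ (that is, $R$ a field) has empty vertex set, so $\Gamma(R)$ has no vertices and $\tilde{H}_n(K(R))$ vanishes trivially; I would dispose of this at the outset.

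For the main case $\mathfrak{m} \neq 0$, the crucial observation is that $\mathfrak{m}$ is nilpotent. Let $v$ be the least positive integer with $\mathfrak{m}^{v} = 0$; since $\mathfrak{m} \neq 0$ we have $v \geq 2$, and by minimality $\mathfrak{m}^{v-1} \neq 0$. I would then choose a nonzero element $w \in \mathfrak{m}^{v-1}$. Because $v \geq 2$ we have $\mathfrak{m}^{v-1} \subseteq \mathfrak{m}$, so $w \in \mathfrak{m} \setminus \{0\}$ is a genuine vertex of $\Gamma(R)$, and $w\,\mathfrak{m} \subseteq \mathfrak{m}^{v} = 0$. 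In particular $wa = 0$ for every vertex $a \neq w$, so $w$ is adjacent in $\Gamma(R)$ to every other vertex.

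Since $K(R)$ is the clique (flag) complex of $\Gamma(R)$, such a universal vertex forces a cone structure: for any clique $\sigma$ with $w \notin \sigma$, the set $\sigma \cup \{w\}$ is again a clique, as $w$ is adjacent to each vertex of $\sigma$ and $\sigma$ is already a clique. Thus $\sigma \in K(R)$ implies $\sigma \cup \{w\} \in K(R)$, which says precisely that $K(R)$ is a cone with apex $w$. The standard cone chain homotopy then shows the augmented chain complex of $K(R)$ is acyclic, giving $\tilde{H}_n(K(R)) = 0$ for all $n$, as claimed.

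The argument is short, and the only genuinely delicate points are confirming that the chosen $w$ is a legitimate nonzero zero-divisor and handling the field case. The former is exactly where the nilpotency index enters: minimality of $v$ guarantees $\mathfrak{m}^{v-1}$ is nonzero while still contained in $\mathfrak{m}$, so $w$ exists and is a vertex. The latter amounts to assigning the void complex vanishing reduced homology (or tacitly excluding fields), which I expect to be the only borderline subtlety rather than a real obstacle.
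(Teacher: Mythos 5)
Your proof is correct, but it takes a different route from the paper in the sense that the paper gives no argument at all: it simply cites Corollary 6 of \cite{demeyer05}. Your self-contained version supplies exactly the idea underlying that citation, namely that the socle provides a universal vertex: any $w\in\mathfrak{m}^{v-1}\setminus\{0\}$ annihilates all of $\mathfrak{m}=Z(R)$, so $w$ is adjacent to every other vertex of $\Gamma(R)$, and since $K(R)$ is a flag complex this forces $K(R)$ to be a cone with apex $w$, hence acyclic. This is also precisely the trick the paper itself uses later, inside the proof of Theorem \ref{main}, to show $\tilde{H}_*(K_0)=0$ for the subcomplex of vertices whose last coordinate is a non-unit, so your argument is fully consistent with the paper's toolkit; what it buys is self-containedness, at the cost of a few lines. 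The only point to watch is your ``borderline'' field case: when $R$ is a field, $K(R)$ is the empty complex, and under the augmented-chain-complex convention the paper uses, one has $\tilde{H}_{-1}(\emptyset)\cong\mathbb{Z}$; so the statement should be read as concerning $n\geq 0$ (or with the convention that the void complex has vanishing reduced homology), exactly as you flag.
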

\begin{proof}
This is proved in \cite{demeyer05}, Corollary 6.
\end{proof}

\begin{lemma}\label{lem}
For a finite commutative ring $R$, we have $\tilde{H}_n(K_0(R))\cong\tilde{H}_n(K(R))$ for $n\neq0$, and
\[ \tilde{H}_0(K_0(R))=\begin{cases}
\mathbb{Z}^{|U(R)|}, & \text{if } R \text{ is not a field;} \\
\mathbb{Z}^{|U(R)|-1}, & \text{if } R \text{ is a field.}
\end{cases} \]
\end{lemma}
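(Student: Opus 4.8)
The plan is to identify $K_0(R)$ explicitly as the disjoint union of $K(R)$ with a set of isolated vertices, and then read off both assertions from the behaviour of reduced homology under disjoint union. The starting observation is that in a finite commutative ring every nonzero element is either a unit or a zero-divisor: if $a\neq 0$ is not a zero-divisor, then multiplication by $a$ is an injective, hence (by finiteness) bijective, self-map of $R$, so $a\in U(R)$. Thus the vertex set $R\setminus\{0\}$ of $K_0(R)$ splits as $Z(R)^*\sqcup U(R)$. A unit $u$ satisfies $ux=0\Rightarrow x=0$, and $uv\in U(R)$ is nonzero for $v\in U(R)$, so $u$ is adjacent to no other vertex; hence each of the $|U(R)|$ units is an isolated $0$-simplex. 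Meanwhile a subset $A\subseteq Z(R)^*$ is a simplex of $K_0(R)$ precisely when $xy=0$ for all distinct $x,y\in A$, which is exactly the condition that $A$ be a clique of $\Gamma(R)$, i.e. a simplex of $K(R)$. Therefore $K_0(R)$ is the disjoint union of $K(R)$ with $|U(R)|$ isolated points.

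Granting this description, the first claim is immediate: for $n\geq 1$ the reduced homology of a disjoint union is the direct sum of the reduced homologies of the pieces, and isolated points contribute nothing in positive degrees, so $\tilde{H}_n(K_0(R))\cong\tilde{H}_n(K(R))$.

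For $n=0$ I would count connected components via the standard formula $\tilde{H}_0(X\sqcup Y)\cong\tilde{H}_0(X)\oplus\tilde{H}_0(Y)\oplus\mathbb{Z}$ for nonempty $X,Y$. The one genuinely external input here, and the step I expect to carry the real weight, is the theorem of Anderson and Livingston \cite{AL99} that $\Gamma(R)$ is connected whenever it is nonempty, i.e. whenever $R$ is not a field; everything else is a direct unwinding of the definitions. In the non-field case $K(R)$ is a single connected complex, so $K_0(R)$ has exactly $1+|U(R)|$ components and $\tilde{H}_0(K_0(R))\cong\mathbb{Z}^{|U(R)|}$. If instead $R$ is a field, then $Z(R)^*=\emptyset$, so $K(R)$ is empty and $K_0(R)$ consists of the $|U(R)|$ isolated units alone, giving $\tilde{H}_0(K_0(R))\cong\mathbb{Z}^{|U(R)|-1}$. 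I would note finally that the asserted isomorphism in the first part is intended for $n\geq 1$: in the field case the empty complex $K(R)$ carries the extra reduced class $\tilde{H}_{-1}(\emptyset)\cong\mathbb{Z}$, which the nonempty $K_0(R)$ lacks, so degree $-1$ is the harmless excluded borderline.
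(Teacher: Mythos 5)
Your proof is correct and takes essentially the same approach as the paper's: both identify $K_0(R)$ as the disjoint union of $K(R)$ with $|U(R)|$ isolated unit vertices, use additivity of reduced homology in positive degrees, and count connected components (via connectedness of $\Gamma(R)$ when $R$ is not a field) in degree zero. Your additional justifications — that every nonzero non-unit of a finite ring is a zero-divisor, the explicit appeal to Anderson--Livingston for connectedness, and the caveat that $\tilde{H}_{-1}(\emptyset)\cong\mathbb{Z}$ makes $n=-1$ a genuine (if harmless) exception in the field case — only sharpen details the paper's proof leaves implicit.
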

\begin{proof}
Recall that $K_0(R)$ is the disjoint union of $K(R)$ and the set of units $U(R)$ (as isolated vertices).

We compute the reduced homology dimension by dimension.

For $n > 0$, since the reduced homology of a discrete set of points vanishes in positive dimensions, and using the general fact that $\tilde{H}_n(X \sqcup Y) \cong \tilde{H}_n(X) \oplus \tilde{H}_n(Y)$ for $n > 0$, we have:
\[
\tilde{H}_n(K_0(R)) \cong \tilde{H}_n(K(R)) \oplus \tilde{H}_n(\overline{K}_{|U(R)|}) \cong \tilde{H}_n(K(R)) \oplus 0 \cong \tilde{H}_n(K(R)).
\]

For $n = 0$, we compute directly using the number of connected components. Let $c(X)$ denote the number of connected components of $X$.

Case 1: $R$ is not a field.
Then $K(R)$ is connected and non-empty, so $c(K(R)) = 1$. The set of units contributes $|U(R)|$ isolated vertices, giving:
\[
c(K_0(R)) = c(K(R)) + |U(R)| = 1 + |U(R)|.
\]
Therefore, $\tilde{H}_0(K_0(R)) \cong \mathbb{Z}^{(1 + |U(R)|) - 1} = \mathbb{Z}^{|U(R)|}$.

Case 2: $R$ is a field.
Then $K(R)$ is empty and $K_0(R)$ is just the discrete set of units, so $c(K_0(R)) = |U(R)|$ and:
\[
\tilde{H}_0(K_0(R)) \cong \mathbb{Z}^{|U(R)|-1}.
\]

This completes the proof of the lemma.
\end{proof}

\begin{theorem} \label{main}
The homology group $\tilde{H}_n(K(\Omega))$ is free abelian for any integer $n\in\mathbb{Z}$, and we have
\[ \tilde{H}_n(K(\Omega))\cong\sum_{j=1}^{k-1}\sum_{1\leq i_1 <\cdots<i_j\leq k-1}\tilde{H}_{n-1}(K_0(R_{i_1}\times R_{i_2}\times\cdots\times R_{i_j}))^{u_1\cdots\hat{u}_{i_1}\cdots\hat{u}_{i_j}\cdots u_k}
\]
if $R_k$ is not a field. When $R_k$ is a field, the exponent of $\tilde{H}_{n-1}(K_0(R_1\times\cdots\times R_{k-1}))$ in the above formula should be changed to $u_k -1$.
\end{theorem}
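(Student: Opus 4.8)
The plan is to induct on the number $k$ of local factors, peeling off the last factor and writing $\Omega=\Omega'\times R_k$ with $\Omega'=R_1\times\cdots\times R_{k-1}$; the base case $k=1$ is the first lemma ($\tilde H_*(K(R))=0$ for $R$ local), for which the asserted sum is empty. For the inductive step I would classify the vertices $(x,t)$ of $\Gamma(\Omega)$ by their last coordinate $t\in R_k$ into three groups: $t$ a unit, $t$ a nonzero non-unit, and $t=0$. The decisive elementary facts are that two vertices with unit last coordinate are never adjacent, that a unit-last-coordinate vertex is adjacent only to vertices with $t=0$, and that the induced subcomplex on the vertices with $t=0$ is canonically $K_0(\Omega')$, since these are exactly the nonzero elements of $\Omega'$ equipped with the product-zero adjacency.

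The central structural claim is that, in the notation of Definition \ref{main-def},
\[ K(\Omega)=X\triangledown_{K_0(\Omega')}\overline{K}_{u_k}, \]
where $\overline{K}_{u_k}$ consists of the $u_k$ vertices $(0,t)$ with $t\in U(R_k)$ (each adjacent to the entire $t=0$ slice and to nothing else) and $X$ is the induced subcomplex on all remaining vertices. I would verify this from the adjacency bookkeeping above: every simplex either avoids the vertices $(0,t)$, hence lies in $X$, or contains exactly one of them together with a simplex of $K_0(\Omega')$. Theorem \ref{tool2} is then tailored to evaluate $\tilde H_n(K(\Omega))$, provided its hypotheses hold: that $\tilde H_*(K_0(\Omega'))$ is free and that the inclusion-induced map $\tilde H_*(K_0(\Omega'))\to\tilde H_*(X)$ is zero. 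Freeness is supplied by the inductive hypothesis together with Lemma \ref{lem}, and this also yields the first assertion of the theorem, since every short exact sequence produced below will split.

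The subtlety producing the two cases lies precisely in the vanishing of $\tilde H_*(K_0(\Omega'))\to\tilde H_*(X)$. When $R_k$ is \emph{not} a field, I would pick any nonzero $t_0\in\mathfrak{m}_k^{v_k-1}$, so that $t_0\mathfrak{m}_k=0$; then $(0,t_0)$ lies in $X$ and is adjacent to the entire $t=0$ slice, i.e. it is a cone vertex over $K_0(\Omega')$ inside $X$. Hence the inclusion factors through a cone and is null on reduced homology, Theorem \ref{tool2} applies, and it contributes the top term $\tilde H_{n-1}(K_0(\Omega'))^{u_k}$. When $R_k$ is a field, $\mathfrak{m}_k=0$, this cone vertex is absent and the inclusion is no longer null (the isolated unit-vertices of $\Omega'$ survive in $X$); instead I would run Mayer--Vietoris while adjoining the vertices $(0,t)$ one at a time, each step after the first being a split extension by one copy of $\tilde H_{n-1}(K_0(\Omega'))$, which yields the top term $\tilde H_{n-1}(K_0(\Omega'))^{u_k-1}$. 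The arithmetic $u_k$ versus $u_k-1$ is thus traced exactly to $\tilde H_0(K_0(R_k))$ as recorded in Lemma \ref{lem}. A short comparison shows the residual complexes in the two cases are both the mapping cone of $K_0(\Omega')\hookrightarrow\{\text{the }x\neq 0\text{ part}\}$, so they have equal homology and only the single top exponent differs, as the theorem asserts.

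It remains to identify $\tilde H_*(X)$ with the non-top part of the sum, $\sum_{\emptyset\neq S\subsetneq\{1,\dots,k-1\}}\tilde H_{n-1}(K_0(R_S))^{e_S}$ where $R_S=\prod_{i\in S}R_i$. Here I would apply the same circle of ideas to the remaining factors: the subcomplex on the nonzero-non-unit slice together with the $t=0$ slice is again a cone (via $\mathfrak{m}_k^{v_k-1}$) and hence contractible, and a further Mayer--Vietoris/$\triangledown_A$ decomposition peels off the next factor, contributing its unit multiplicity $u_i$ as an exponent or else promoting it to the active factor of some $K_0(R_S)$. Iterating, the branching over which factor becomes active is exactly the sum over subsets $S$, with $e_S=\prod_{i\notin S}u_i$ accumulating from the inactive factors. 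I expect the main obstacle to be precisely this residual analysis, together with the uniform verification of the vanishing hypothesis of Theorem \ref{tool2} at every stage: one must show that $X$ genuinely decomposes into the pieces indexed by the remaining subsets with the correct exponents, not merely bound its homology, and thread the field/non-field dichotomy consistently through the recursion, since a miscount by one in the single modified exponent is the easiest error to make.
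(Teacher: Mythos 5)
Your setup is sound and matches the paper's: induction on $k$, the identification $K(\Omega)=X\triangledown_{K_0(\Omega')}\overline{K}_{u_k}$ from Definition \ref{main-def}, Theorem \ref{tool2} for the top term, the cone vertex $(0,\ldots,0,t_0)$ with $t_0\in\mathfrak{m}_k^{v_k-1}\setminus\{0\}$ forcing the vanishing hypothesis when $R_k$ is not a field, and the one-at-a-time attachment giving the exponent $u_k-1$ in the field case (your observation that the two residual complexes have the same homology is a clean way to unify the dichotomy, arguably tidier than the paper's). But there is a genuine gap exactly where you flag it: you never compute $\tilde{H}_*(X)$, and that computation is where every term of the sum with proper subsets $S\subsetneq\{1,\ldots,k-1\}$ comes from --- i.e.\ all of the formula except its top term. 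Moreover, your sketched mechanism for it (``a further Mayer--Vietoris/$\triangledown_A$ decomposition peels off the next factor\ldots\ iterating, the branching over which factor becomes active'') is not just unproven but misdirected: $X$ is not a smaller instance of the same problem --- it is not $K$ or $K_0$ of any ring, since its unit-last-coordinate vertices attach only into the $t=0$ slice --- so there is no well-founded recursion over the remaining factors to iterate.

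What closes the gap (and what the paper does) is a single additional layer of attachments, organized by the coordinate pattern of the leftover vertices rather than by a recursion. The vertices of $X$ outside the contractible blob of non-unit-last-coordinate vertices are exactly $(x,t)$ with $x\in Z(\Omega')^*$ and $t\in U(R_k)$; they are pairwise non-adjacent, and $(x,t)$ attaches precisely to the subcomplex $A_x$ induced on $\mathrm{Ann}_{\Omega'}(x)\setminus\{0\}$ inside the $t=0$ slice. Classify by $x$: if some coordinate $x_j\in Z(R_j)^*$, then $A_x$ is a cone with apex $(0,\ldots,a_j,\ldots,0)$, $a_j\in\mathfrak{m}_j^{v_j-1}\setminus\{0\}$, so attaching the group $\{x\}\times U(R_k)$ changes nothing; if instead every coordinate of $x$ is a unit or zero, with zero-set $J$ (necessarily $\emptyset\neq J\subsetneq\{1,\ldots,k-1\}$), then $A_x\cong K_0\bigl(\prod_{j\in J}R_j\bigr)$, and Theorem \ref{tool2} --- whose vanishing hypothesis holds because the inclusion of $A_x$ factors through the contractible blob (in the field case, through a map already shown to be null), and whose freeness hypothesis is the inductive hypothesis together with Lemma \ref{lem} --- makes each group $\{x\}\times U(R_k)\cong\overline{K}_{u_k}$ contribute a summand $\tilde{H}_{n-1}(K_0(\prod_{j\in J}R_j))^{u_k}$. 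There are exactly $\prod_{i\in\{1,\ldots,k-1\}\setminus J}u_i$ such vertices $x$ for each $J$, whence the exponent $u_k\prod_{i\in\{1,\ldots,k-1\}\setminus J}u_i$ of the theorem. Without this support classification, your argument establishes only the splitting-off of $\tilde{H}_{n-1}(K_0(\Omega'))^{u_k}$ (resp.\ $^{u_k-1}$), not the full formula.
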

\begin{proof}
We use induction on $k$. Represent the elements of $\Omega$ as $k$-tuples $(x_1,\ldots,x_k)$ with $x_i\in R_i$. Let $K_0$ be the subcomplex of $K(\Omega)$ induced on $\{(x_1,\ldots,x_k)\in Z(\Omega)^* \mid x_k\notin U(R_k)\}$. This complex is a cone: for any $z_k \in \mathfrak{m}_k^{v_k-1}\setminus \{0\}$, the vertex $(0,\ldots,0,z_k)$ is adjacent to all other vertices of $K_0$. Hence $\tilde{H}_n(K_0)=0$ for all $n\in\mathbb{Z}$.

Define $K_1$ as the subcomplex induced on $K_0$ together with $0\times\cdots\times0\times U(R_k)$. Each vertex in $0\times\cdots\times0\times U(R_k)$ is adjacent to the subcomplex on $(R_1\times\cdots\times R_{k-1}\setminus\{0\})\times 0$, which is isomorphic to $K_0(R_1\times\cdots\times R_{k-1})$. Since the clique complex of $0\times\cdots\times0\times U(R_k)$ is $\overline{K}_{u_k}$, we have $K_1 = K_0 \triangledown_{K_0(R_1\times\cdots\times R_{k-1})} \overline{K}_{u_k}$. Theorem \ref{tool2} gives:
\[ \tilde{H}_n(K_1) \cong \tilde{H}_{n-1}(K_0(R_1\times\cdots\times R_{k-1}))^{u_k}. \]

The remaining vertices form the set $Z(R_1\times\cdots\times R_{k-1})^* \times U(R_k)$. We partition them into two types:

\textbf{Type I:} Vertices with at least one nonzero zero-divisor in the first $k-1$ coordinates. These lie in sets:
\[
V_J = \left( \prod_{j \in J} Z(R_j)^* \times \prod_{i \notin J} R_i \right) \times U(R_k)
\]
for non-empty $J \subseteq \{1,\ldots,k-1\}$. For each such $V_J$, the attachment subcomplex $A_J$ is a cone (take $a_j \in \mathfrak{m}_j^{v_j-1}\setminus\{0\}$ for some $j \in J$; then $(0,\ldots,a_j,\ldots,0,0)$ is a cone point). By Theorem \ref{tool2}, these attachments preserve homology.

\textbf{Type II:} Vertices where the first $k-1$ coordinates are units or zero, but not all units. These correspond to sets:
\[
V_J' = \left( \prod_{j \in J} \{0\} \times \prod_{i \notin J} U(R_i) \right) \times U(R_k)
\]
for non-empty $J \subseteq \{1,\ldots,k-1\}$. The attachment is over $A_J'$ induced on 
\[\left( \prod_{j \in J} R_j \times \prod_{i \notin J} \{0\} \right) \times \{0\},\]
which is isomorphic to $K_0\left( \prod_{i \in J} R_i \right)$. Since $\tilde{H}_*(K_0) = 0$, the map $\tilde{H}_*(A_J') \to \tilde{H}_*(K_1)$ is zero. By Theorem \ref{tool2}, attaching $V_J'$ contributes a direct summand:
\[
\tilde{H}_{*-1}\left(K_0\left( \textstyle\prod_{i \in J} R_i \right)\right)^{u_k \cdot \prod_{i \in J} u_i}.
\]

After attaching all Type I and Type II vertices, we obtain $K(\Omega)$. Type I attachments preserve homology, while Type II attachments yield the additional terms in the formula. This completes the inductive step when $R_k$ is not a field.

Now assume $R_k$ is a field. Then:
\[
K_0 = K_0((R_1\times\cdots\times R_{k-1}\setminus\{0\})\times\{0\}) = K_0(R_1\times\cdots\times R_{k-1})
\]
and $K_1 = K_0(R_1\times\cdots\times R_{k-1}) \triangledown \overline{K}_{u_k}$. Corollary 2.1 gives:
\[
\tilde{H}_n(K_1) \cong \tilde{H}_{n-1}(K_0(R_1\times\cdots\times R_{k-1}))^{u_k-1}.
\]
Since $\tilde{H}_n(K_0) \to \tilde{H}_n(K_1)$ is zero, the same procedure applies. The conclusion follows.
\end{proof}

\begin{corollary} \label{cor4}
The homology group $\tilde{H}_n(K(\Omega))$ is 0 if $n\geq k$.
\end{corollary}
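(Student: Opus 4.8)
The plan is to argue by induction on the number $k$ of local factors, running in parallel with the recursive structure of Theorem \ref{main}. I would take as inductive hypothesis the statement $P(k)$: for every product $\Omega$ of $k$ finite local rings one has $\tilde{H}_n(K(\Omega)) = 0$ whenever $n \geq k$, and I would use complete induction, assuming $P(1),\ldots,P(k-1)$ when proving $P(k)$.

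For the base case $k = 1$ the ring $\Omega = R_1$ is local, and the lemma asserting $\tilde{H}_n(K(R)) = 0$ for a finite local ring $R$ immediately yields $P(1)$ (in fact vanishing in every degree, not merely $n \geq 1$).

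For the inductive step with $k \geq 2$, I would fix $n \geq k$ and feed $\Omega$ into Theorem \ref{main}. Whether or not $R_k$ is a field, the right-hand side is a finite direct sum of copies of groups of the form $\tilde{H}_{n-1}(K_0(R_{i_1} \times \cdots \times R_{i_j}))$ with $1 \leq j \leq k-1$. The key observation is the degree bookkeeping: since $n \geq k$ and $j \leq k-1$, we have $n - 1 \geq k - 1 \geq j \geq 1$. In particular $n - 1 \geq 1$, so $n-1 \neq 0$ and Lemma \ref{lem} applies, identifying $\tilde{H}_{n-1}(K_0(R_{i_1} \times \cdots \times R_{i_j}))$ with $\tilde{H}_{n-1}(K(R_{i_1} \times \cdots \times R_{i_j}))$. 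As $R_{i_1} \times \cdots \times R_{i_j}$ is a product of $j \leq k-1$ local rings and $n - 1 \geq j$, the hypothesis $P(j)$ forces this group to vanish. Since every summand on the right-hand side of Theorem \ref{main} thus vanishes, so does $\tilde{H}_n(K(\Omega))$, which establishes $P(k)$ and completes the induction.

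The only point requiring care — and what I would flag as the main (though minor) obstacle — is the degree bookkeeping that threads the two index shifts together: Theorem \ref{main} lowers the homological degree by one while the factors appearing on the right involve strictly fewer local rings, so the bound $n \geq k$ must be checked to survive the passage from $(n,k)$ to $(n-1,j)$ for every admissible $j \leq k-1$. One must simultaneously verify that the shifted degree $n-1$ never lands on $0$, since it is precisely away from degree $0$ that Lemma \ref{lem} converts $K_0$ into $K$; this is exactly guaranteed by $k \geq 2$ together with $n \geq k$. No genuine difficulty arises beyond this accounting.
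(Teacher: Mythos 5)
Your proof is correct and is essentially the argument the paper intends: the corollary is stated as an immediate consequence of Theorem \ref{main}, obtained by induction on $k$ exactly as you describe, with Lemma \ref{lem} converting $K_0$ back to $K$ in degrees $n-1\neq 0$ and the local-ring lemma handling the base case. Your degree bookkeeping ($n\geq k$, $j\leq k-1$ forcing $n-1\geq j\geq 1$) is the right verification and contains no gaps.
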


\begin{remark}
In \cite{demeyer05}, the authors ask if there is a simplicial decomposition of the Klein bottle or the real projective plane which is the complex of a semigroup $S$. From Theorem \ref{main} we know the answer is no if $S$ is a finite commutative ring.
\end{remark}

Using Theorem \ref{main}, we can determine the homology groups $\tilde{H}_n(K(\mathbb{Z}_n))$. We provide some examples.

\begin{example}
By Theorem \ref{main}, we have $\tilde{H}_n(K(\mathbb{Z}_{p_1^{n_1}}\times\mathbb{Z}_{p_2^{n_2}}))=\tilde{H}_{n-1}(K_0(\mathbb{Z}_{p_1^{n_1}}))^{\phi(p_2^{n_2})}$, where $n_1,n_2>1$. Therefore
\[ \tilde{H}_n(K(\mathbb{Z}_{p_1^{n_1}}\times\mathbb{Z}_{p_2^{n_2}}))=\begin{cases}
\mathbb{Z}^{\phi(p_1^{n_1})\phi(p_2^{n_2})}, & n=1; \\
0, & n\neq 1.
\end{cases} \]
\end{example}

\begin{example}
Let $n_1>1$. Since $\tilde{H}_n(K(\mathbb{Z}_{p_1^{n_1}}\times\mathbb{Z}_{p_2}))=\tilde{H}_{n-1}(K_0(\mathbb{Z}_{p_1^{n_1}}))^{p_2-2}$, we have
\[ \tilde{H}_n(K(\mathbb{Z}_{p_1^{n_1}}\times\mathbb{Z}_{p_2}))=\begin{cases}
\mathbb{Z}^{\phi(p_1^{n_1})(p_2-2)}, & n=1; \\
0, & n\neq 1.
\end{cases} \]
\end{example}

\begin{example}\label{ex3}
The Theorem \ref{main} implies that
 \begin{align*}  & \tilde{H}_n(K(\mathbb{Z}_{p_1}\times\mathbb{Z}_{p_2}\times\mathbb{Z}_{p_3})) \\
={} & \tilde{H}_{n-1}(K_0(\mathbb{Z}_{p_1}\times\mathbb{Z}_{p_2}))^{p_3 -2}\oplus\tilde{H}_{n-1}(K_0(\mathbb{Z}_{p_1}))^{(p_2 -1)(p_3 -1)}\oplus\tilde{H}_{n-1}(K_0(\mathbb{Z}_{p_2}))^{(p_1 -1)(p_2 -1)}. \end{align*}
Then $\tilde{H}_2(K(\mathbb{Z}_{p_1}\times\mathbb{Z}_{p_2}\times\mathbb{Z}_{p_3}))=\mathbb{Z}^{(p_1 -2)(p_2 -2)(p_3 -2)}$ and 
\[ \tilde{H}_1(K(\mathbb{Z}_{p_1}\times\mathbb{Z}_{p_2}\times\mathbb{Z}_{p_3}))=\mathbb{Z}^{(p_1-1)(p_2-1)(p_3-2)+(p_1-2)(p_2-1)(p_3-1)+(p_1-1)(p_2-2)(p_3-1)}. \]
For any other integer $n$, $\tilde{H}_n(K(\mathbb{Z}_{p_1}\times\mathbb{Z}_{p_2}\times\mathbb{Z}_{p_3}))=0$.
\end{example}

\begin{example}
Let $R\cong\mathbb{Z}_{p_1}\times\mathbb{Z}_{p_2}\times\mathbb{Z}_{p_3}\times\mathbb{Z}_{p_4}$. We compute $\tilde{H}_n(K(R))$. Since $\tilde{H}_n(K(\mathbb{Z}_{p_1}\times\mathbb{Z}_{p_2}\times\mathbb{Z}_{p_3}))=0$ for $n\geq 3$ and $\tilde{H}_n(K(\mathbb{Z}_{p_i}\times\mathbb{Z}_{p_j}))=0$ for $n\geq 2$. Hence $\tilde{H}_n(K(R))=0$ for $n\geq 4$ and $\tilde{H}_3(K(R))=\mathbb{Z}^{(p_1-2)(p_2-2)(p_3-2)(p_4-2)}$. By Example \ref{ex3} and Corollary \ref{cor3}, we get
\[ \mathrm{rank}\tilde{H}_2(K(R))=\sum_{1\leq i_1<i_2\leq 4}(p_{i_1}-1)(p_{i_2}-1)(p_{j_1}-2)(p_{j_2}-2), \]
where $\{j_1,j_2\}=\{1,2,3,4\}\setminus\{i_1,i_2\}$. By Lemma \ref{lem}, we have
\[ \mathrm{rank}\tilde{H}_1(K(R))=\sum_{1\leq i_1<i_2<i_3\leq 4}(p_{i_1}-1)(p_{i_2}-1)(p_{i_3}-1)(p_{j_1}-2)+3\prod_{k=1}^4 (p_k-1), \]
where $\{j_1\}=\{1,2,3,4\}\setminus\{i_1,i_2,i_3\}$.
\end{example}

When all the $R_i$ are fields or not fields, the Betti numbers $b_n(K(\Omega))$ can be viewed as a symmetric polynomial with variables $u_1,\ldots, u_k$. In these cases, we can describe the Betti numbers $b_n(K(\Omega))$ more explicitly. In the following, we use $b_n(u_1,\ldots, u_k)$ to denote the Betti number $\mathrm{rank}\tilde{H}_n(K(\Omega))$. By Corollary \ref{cor4}, we can focus on the case $n\leq k-1$.\par

When none of the local factors are fields, the Betti numbers take a particularly simple form.

\begin{theorem}
When every $R_i$ is not a field, we have
\[ b_n(u_1,\ldots, u_k)=a_{n,k}u_1\cdots u_k, \]
where $a_{n,k}\in\mathbb{Z}, a_{0,k}=1$ and
\[ a_{n,k}=\sum_{j=1}^{k-1}\binom{k-1}{j}a_{n-1,j}. \]
\end{theorem}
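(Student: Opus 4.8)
The plan is to prove the statement by induction on the number of local factors $k$, using Theorem \ref{main} to unfold one factor at a time and Lemma \ref{lem} to pass between $K$ and $K_0$. The essential phenomenon is that the exponents appearing in Theorem \ref{main} conspire so that every summand contributes a factor of exactly $u_1\cdots u_k$, independently of which index set it comes from; this is precisely what forces the Betti number to be a scalar multiple of $u_1\cdots u_k$ and what produces the binomial recursion for $a_{n,k}$.

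First I would recast the claim in terms of $K_0$ rather than $K$, since this is what makes the initialization $a_{0,k}=1$ natural and what permits a single uniform induction. Set $\beta_n:=\mathrm{rank}\,\tilde H_n(K_0(\Omega))$. By Lemma \ref{lem}, $\beta_n=b_n$ for $n\geq 1$, while $\beta_0=u_1\cdots u_k$: indeed $\Omega$ is a product of at least one non-field and hence is itself not a field, so the non-field branch of Lemma \ref{lem} gives $\beta_0=|U(\Omega)|=u_1\cdots u_k$. Thus establishing $\beta_n=a_{n,k}u_1\cdots u_k$ for all $n\geq 0$ yields the theorem for $n\geq 1$, and the $n=0$ instance records precisely the initialization $a_{0,k}=1$.

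The base case $k=1$ is immediate: $\tilde H_n(K(R_1))=0$ for all $n$ by the local vanishing result (Lemma 3.1), so $\beta_0=u_1$ (matching $a_{0,1}=1$) while $\beta_n=0=a_{n,1}$ for $n\geq 1$, the defining sum for $a_{n,1}$ being empty. For the inductive step, assume the formula for all products of fewer than $k$ non-field local rings. The case $n=0$ is handled by the computation of $\beta_0$ above, so fix $n\geq 1$. Applying Theorem \ref{main} in its non-field branch (valid since $R_k$ is not a field) gives
\[ \tilde H_n(K(\Omega))\cong\bigoplus_{\emptyset\neq S\subseteq\{1,\ldots,k-1\}}\tilde H_{n-1}\!\left(K_0\!\left(\textstyle\prod_{i\in S}R_i\right)\right)^{\prod_{l\notin S}u_l}, \]
where $l$ runs over $\{1,\ldots,k\}\setminus S$. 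Taking ranks and invoking the induction hypothesis on each $\prod_{i\in S}R_i$—legitimate since $|S|\leq k-1<k$, and since a product of non-fields is again a non-field, so the hypotheses of both Theorem \ref{main} and Lemma \ref{lem} persist—each summand has rank $a_{n-1,|S|}\prod_{i\in S}u_i$. Here the uniform $K_0$-formulation is what does the work: it covers the genuine inductive case $n-1\geq 1$ and the bottom case $n-1=0$ (where $\beta_0=\prod_{i\in S}u_i$ supplies the value $a_{0,|S|}=1$) in a single stroke.

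The argument then closes with a bookkeeping identity. For each $S$ the sets $S$ and $\{1,\ldots,k\}\setminus S$ partition $\{1,\ldots,k\}$, so $\bigl(\prod_{i\in S}u_i\bigr)\bigl(\prod_{l\notin S}u_l\bigr)=u_1\cdots u_k$ regardless of $S$. Hence
\[ \beta_n=\sum_{\emptyset\neq S\subseteq\{1,\ldots,k-1\}}a_{n-1,|S|}\,u_1\cdots u_k=\Biggl(\sum_{j=1}^{k-1}\binom{k-1}{j}a_{n-1,j}\Biggr)u_1\cdots u_k, \]
where the last step groups the subsets of $\{1,\ldots,k-1\}$ by their cardinality $j$, of which there are $\binom{k-1}{j}$. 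This is exactly $a_{n,k}u_1\cdots u_k$ with $a_{n,k}=\sum_{j=1}^{k-1}\binom{k-1}{j}a_{n-1,j}$, completing the induction and, for $n\geq 1$, the theorem. I expect the only genuine subtlety—rather than a real obstacle—to be the disciplined separation of the degrees $n=0$ and $n\geq 1$: one must phrase the induction for $K_0$ so that Lemma \ref{lem} feeds the value $a_{0,j}=1$ into the recursion at the bottom degree. Once this is arranged, the product identity $\bigl(\prod_{i\in S}u_i\bigr)\bigl(\prod_{l\notin S}u_l\bigr)=u_1\cdots u_k$ carries the whole computation.
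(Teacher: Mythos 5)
Your proof is correct and follows essentially the same route as the paper: induction on $k$ using the non-field branch of Theorem \ref{main}, with Lemma \ref{lem} supplying the degree-zero input and the identity $\bigl(\prod_{i\in S}u_i\bigr)\bigl(\prod_{l\notin S}u_l\bigr)=u_1\cdots u_k$ producing the binomial recursion. Your explicit recasting in terms of $\beta_n=\mathrm{rank}\,\tilde H_n(K_0(\Omega))$ is a cleaner rendering of what the paper does implicitly (its $b_{n-1}$ terms are in fact $K_0$-Betti numbers), and it resolves the $n=0$ reading that the paper's statement leaves ambiguous.
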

\begin{proof}
The statement $a_{0,k}=1$ follows from the Lemma \ref{lem}. We use induction to prove the second equation. By Theorem \ref{main}, we have
\begin{align*}
b_n(u_1,\ldots, u_k) &=\sum_{j=1}^{k-1}\sum_{1\leq i_1 <\cdots<i_j\leq k-1}b_{n-1}(u_{i_1},\dots,u_{i_j})\frac{u_1\cdots u_k}{u_{i_1}\cdots u_{i_j}} \\
&=\sum_{j=1}^{k-1}\sum_{1\leq i_1 <\cdots<i_j\leq k-1}a_{n-1,j}u_1\cdots u_k \\
&=\sum_{j=1}^{k-1}a_{n-1,j}\binom{k-1}{j} u_1\cdots u_k. 
\end{align*}
The induction completes.
\end{proof}

If all the $R_i$ are fields, the Theorem \ref{main} implies that the $b_n(u_1,\ldots, u_k)$ involves $u_i$ and $u_i -1$. We set
\[ \sigma_j(u_1,\ldots, u_k)\coloneqq \sum_{1\leq i_1 <\cdots<i_j\leq k}(u_{i_1}-1)\cdots(u_{i_j}-1)u_{n_1}\cdots u_{n_{k-j}}, \]
where $\{ n_1,\ldots, n_{k-j}\}=\{1,2,\ldots,n\}\setminus \{i_1,\ldots,i_j \}$. If $j>k$ or $j<0$, we understand that $\sigma_j(u_1,\ldots, u_k)=0$.

\begin{theorem}
If every $R_i$ is a field, then we have $b_{k-1}(u_1,\ldots,u_k)=\sigma_k(u_1,\ldots,u_k)$ and
\[ b_{k-j}(u_1,\ldots,u_k)=A_{j-1,1}\sigma_{k-j}(u_1,\ldots,u_k)+\cdots +A_{j-1,j-1}\sigma_{k-2j+2}(u_1,\ldots,u_k) \]
for any $j\geq 2$. The numbers $A_{i,j}$ satisfies the recurrence relation
\[ A_{j-1,t}=A_{j-2,t-1}\binom{j+t-2}{1}+A_{j-3,t-1}\binom{j+t-2}{2}+\cdots+A_{t-1,t-1}\binom{j+t-2}{j-t} \]
and $A_{j,1}=1$.
\end{theorem}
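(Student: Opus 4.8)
The plan is to induct on the number of local factors $k$, using the recursive formula of Theorem \ref{main} together with Lemma \ref{lem} to reduce $b_n(u_1,\ldots,u_k)$ to the Betti numbers of products of fewer fields, and then to repackage the result in terms of the symmetric functions $\sigma_j$. First I would record the concrete recursion. Since every $R_i$ is a field, Theorem \ref{main} gives, for $n\ge 2$,
\[ b_n(u_1,\ldots,u_k)=\sum_{m=n}^{k-1}\ \sum_{\substack{J\subseteq\{1,\ldots,k-1\},\ |J|=m}} b_{n-1}(u_i:i\in J)\,E_J, \]
where $E_J=u_k\prod_{i\in\{1,\ldots,k-1\}\setminus J}u_i$ for $J\neq\{1,\ldots,k-1\}$ and $E_{\{1,\ldots,k-1\}}=u_k-1$. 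Here I have used Lemma \ref{lem} to replace $\tilde H_{n-1}(K_0)$ by $\tilde H_{n-1}(K)$ (valid for $n\ge 2$) and Corollary \ref{cor4} to restrict the sum to $|J|\ge n$.

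The heart of the argument is two combinatorial identities for the $\sigma_j$. The first is a \emph{lifting identity}: expanding $\sigma_s(u_i:i\in J)$ as a sum over $s$-subsets $I\subseteq J$ and absorbing the weight $E_J^0:=u_k\prod_{i\in\{1,\ldots,k-1\}\setminus J}u_i$ turns each monomial into $\prod_{i\in I}(u_i-1)\prod_{i\notin I}u_i$, whence a count of the $J\supseteq I$ of size $m$ yields
\[ \sum_{|J|=m}\sigma_s(u_i:i\in J)\,E_J^0=\binom{k-1-s}{m-s}\,u_k\,\sigma_s(u_1,\ldots,u_{k-1}). \]
The second is the \emph{Pascal-type} relation $\sigma_s(u_1,\ldots,u_k)=u_k\,\sigma_s(u_1,\ldots,u_{k-1})+(u_k-1)\,\sigma_{s-1}(u_1,\ldots,u_{k-1})$, which follows instantly from the generating function $\sum_s\sigma_s x^s=\prod_i\bigl(u_i+x(u_i-1)\bigr)$. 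That same generating function shows the $\sigma_s(u_1,\ldots,u_{k-1})$ are linearly independent as polynomials, which legitimizes comparing coefficients.

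With these in hand I would substitute the inductive hypothesis $b_{n-1}(u_i:i\in J)=\sum_t A_{m-n,t}\,\sigma_{n-t}(u_i:i\in J)$ into the recursion, apply the lifting identity term by term, and correct the single weight $E_{\{1,\ldots,k-1\}}$ from $u_k$ to $u_k-1$ (a $-1$ correction supported on $m=k-1$). The main part then becomes a combination of the $u_k\sigma_s(u_1,\ldots,u_{k-1})$ and the correction a combination of the $\sigma_s(u_1,\ldots,u_{k-1})$. Expanding the desired answer $\sum_t A_{k-n-1,t}\,\sigma_{n-t+1}(u_1,\ldots,u_k)$ via the Pascal relation and matching coefficients of $u_k\sigma_s(u_1,\ldots,u_{k-1})$ and of $\sigma_s(u_1,\ldots,u_{k-1})$ separately, the pure-$\sigma$ coefficients agree automatically (both equal $-A_{k-n-1,\,n-s}$), while the $u_k\sigma_s$ coefficients produce exactly
\[ A_{P,T+1}=\sum_{p=T}^{P-1}A_{p,T}\binom{P+T}{p+T}\qquad(P=k-n-1,\ T=n-s). \]
After the substitution $P=j-1$, $T=t-1$ and the binomial symmetry $\binom{N}{r}=\binom{N}{N-r}$, this re-indexes to the stated recurrence; taking $T=0$ with the convention $A_{p,0}=\delta_{p,0}$ recovers $A_{j,1}=1$.

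I expect the main obstacle to be the two boundary phenomena rather than the generic identity. The first is the case $n=1$: here Lemma \ref{lem} forces $\tilde H_0(K_0(F_J))$ to equal $\prod_{i\in J}u_i$ (or $u_i-1$ for a singleton), which differs from $b_0=0$, so the recursion must be run with these explicit values in place of $b_{n-1}$ and checked by hand to reproduce $b_1=\sum_t A_{k-2,t}\,\sigma_{2-t}(u_1,\ldots,u_k)$; the singleton contributions are precisely what rescue the formula. The second is the bookkeeping of degenerate indices (the ranges $s>n$ and $m=n$, empty products, and the convention $A_{0,0}=1$) needed so that the top case serves as the induction base: there only $J=\{1,\ldots,k-1\}$ survives, giving $b_{k-1}=(u_k-1)\,b_{k-2}(u_1,\ldots,u_{k-1})=\prod_i(u_i-1)=\sigma_k$. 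Once these boundaries are pinned down, the generic coefficient comparison closes the induction.
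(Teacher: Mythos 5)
Your proposal is correct and takes essentially the same route as the paper: a double induction driven by the recursion of Theorem \ref{main}, substituting the inductive hypothesis and extracting the coefficient of each $\sigma$-term by counting the subsets of ``unit'' indices disjoint from the $(u_i-1)$-support --- your lifting-identity count $\binom{k-1-s}{m-s}$ is exactly the paper's $\binom{j+t-2}{m}$ under complementary indexing, which is why the binomial symmetry $\binom{N}{r}=\binom{N}{N-r}$ appears when you reindex to the stated recurrence. The only differences are organizational (you match coefficients in the linearly independent family $\{u_k\sigma_s,\ \sigma_s\}$ via the Pascal relation instead of counting monomials directly, and you handle the $n=1$/degree-zero boundary, where $\tilde H_0(K_0)$ and $\tilde H_0(K)$ differ, explicitly rather than silently adopting the $K_0$-convention for $b_0$ as the paper does), and both versions close the induction.
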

\begin{proof}
The equation $b_{k-1}(u_1,\ldots,u_k)=\sigma_k(u_1,\ldots,u_k)$ can be easily deduced by the Theorem \ref{main} and Corollary \ref{cor4}. Now we use double induction to prove the second equation. The Lemma \ref{lem} implies that $b_0(u_1,\ldots,u_j)=u_1\cdots u_j=\sigma_0(u_1,\ldots,u_j)$. This proves the case $k=j$. Next we establish the equation for $j=2$.  Using Theorem \ref{main} and induction on $k$, we have
\begin{align*}
b_{k-2}(u_1,\ldots,u_k) &=b_{k-3}(u_1,\ldots,u_{k-1})(u_k-1)+\sum_{j=1}^{k-1}b_{k-3}(u_1,\ldots,\hat{u}_j,\ldots,u_{k-1})u_j u_k \\
&=\sigma_{k-3}(u_1,\ldots,u_{k-1})(u_k -1)+\sum_{j=1}^{k-1}\sigma_{k-2}(u_1,\ldots,\hat{u}_j,\ldots,u_{k-1})u_j u_k \\
&=\sigma_{k-2}(u_1,\ldots,u_k).
\end{align*}
Set $V(i_1,\ldots,i_m)=\{u_1,\ldots,u_{k-1}\}\setminus \{u_{i_1},\ldots,u_{i_m}\}$. Therefore
\[ b_{k-j}(u_1,\ldots,u_k) =\sum_{m=0}^{j-1}\sum_{1\leq i_1 <\cdots<i_m\leq k-1}b_{k-j-1}(V(i_1,\ldots,i_m))u_{i_1}\cdots u_{i_m}u_{k,m}, \]
where $u_{k,0}=u_k -1$ and $u_{k,m}=u_k$ for $m\geq 1$. By induction on $j$ and $k$, we have the term $b_{k-j-1}(V(i_1,\ldots,i_m))$ equals to
\[ A_{j-m-1,1}\sigma_{k-j-1}(V(i_1,\ldots,i_m))+\cdots+A_{j-m-1,j-m-1}\sigma_{k-2j+m+1}(V(i_1,\ldots,i_m)). \]
If $m=0$, then the term $b_{k-j-1}(u_1,\ldots,u_{k-1})(u_k -1)$ contributes 
\[ A_{j-1,t}\sigma_{k-j-t}(u_1,\ldots,u_{k-1})(u_k -1). \]
If $m\geq 1$, the term $A_{j-m-1,t-1}\sigma_{k-j-t+1}(V(i_1,\ldots,i_m))$ contributes  \[ (v_1-1)\cdots(v_{k-j-t+1}-1)(\prod_{u_i \notin\{v_1,\ldots,v_{k-j-t+1},u_k\}}u_i) u_k \]
if and only if $\{v_1,\ldots,v_{k-j-t+1}\}\cap\{u_{i_1},\ldots,u_{i_m}\}=\emptyset$. There are $\binom{j+t-2}{m}$ ways to choose $u_1,\ldots,u_m$. Since $A_{j-1,t}=\sum_{m=1}^{j-t}A_{j-m-1,t-1}\binom{j+t-2}{m}$, the coefficient of the term
\[ (u_1-1)\cdots(u_{k-j-t+1}-1)\prod_{u_i \notin\{u_1,\ldots,u_{k-j-t+1}\}}u_i \]
is always $A_{j-1,t}$, no matter whether there exists the factor $(u_k -1)$. The induction completes.
\end{proof}

\section{Applications}

In this section, we give two applications of our main Theorem \ref{main}. We can classify the ring $\Omega$ when the simplicial complex $K(\Omega)$ is Cohen-Macaulay or $K(\Omega)$ is a decomposition of a compact surface.

\subsection{The Cohen-Macaulay Property of $K(\Omega)$}

Let $\Delta$ be a simplicial complex with vertex set $\{v_1,\ldots,v_n\}$ and $k$ a ring. Recall that the Stanley-Reisner ring of the complex $\Delta$ is the homogeneneous $k-$algebra $k[\Delta]=k[X_1,\ldots,X_n]/I_{\Delta}$, where $I_{\Delta}$ is the ideal generated by all monomials $X_{i_1}X_{i_2}\cdots X_{i_s}$ such that $\{v_{i_1},v_{i_2},\ldots,v_{i_s}\}\notin\Delta$. The complex $\Delta$ is called a Cohen-Macaulay complex over $k$ if $k[\Delta]$ is a Cohen-Macaulay ring. We say that $\Delta$ is a Cohen-Macaulay complex if $\Delta$ is Cohen-Macaulay over some field. Let $F$ be a subset of the vertex set of $\Delta$. The link of $F$ is the set $\mathrm{lk}_{\Delta}F=\{G\in\Delta\colon F\cup G\in\Delta,F\cap G=\emptyset\}$. For more information about Cohen-Macaulay complex, the reader can consult \cite{BH98}.

\begin{theorem}[\cite{rei76}]\label{reisner}
Let $\Delta$ be a simplicial complex, $k$ a field. The following conditions are equivalent: \\
(a) $\Delta$ is Cohen-Macaulay over $k$; \\
(b) $\tilde{H}_i(\mathrm{lk}F;k)=0$ for all $F\in\Delta$ and all $i<\dim\ \mathrm{lk}F$;\\
(c) $\tilde{H}_i(\Delta;k)=0$ for all $i<\dim\ \Delta$, and the links of all vertices of $\Delta$ are Cohen-Macaulay over $k$.
\end{theorem}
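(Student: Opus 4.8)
The plan is to route the two equivalences through the local cohomology of the Stanley--Reisner ring $k[\Delta]$, using Hochster's formula to convert the commutative algebra into the topology of links, and then to derive (b)$\Leftrightarrow$(c) by a purely combinatorial manipulation of iterated links. Write $S=k[X_1,\dots,X_n]$ with irrelevant maximal ideal $\mathfrak{m}$, and recall the standard fact $d:=\dim k[\Delta]=\dim\Delta+1$, which holds because the minimal primes of $I_\Delta$ are exactly the prime ideals attached to the facets of $\Delta$. The single homological input I need is the characterization of depth by local cohomology: $k[\Delta]$ is Cohen--Macaulay if and only if $H^i_{\mathfrak m}(k[\Delta])=0$ for every $i<d$, since $\mathrm{depth}\,k[\Delta]=\min\{i:H^i_{\mathfrak m}(k[\Delta])\neq0\}$ while the top module $H^d_{\mathfrak m}(k[\Delta])$ is automatically nonzero.

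For (a)$\Leftrightarrow$(b) I would invoke Hochster's formula, which describes the $\mathbb{Z}^n$-graded components of local cohomology through reduced homology of links: for $a\in\mathbb{Z}^n$ the component $H^i_{\mathfrak m}(k[\Delta])_a$ vanishes unless $a\le0$ and the support $W=\{\,j:a_j<0\,\}$ is a face of $\Delta$, in which case
\[ \dim_k H^i_{\mathfrak m}(k[\Delta])_a=\dim_k\tilde{H}_{\,i-|W|-1}(\mathrm{lk}_\Delta W;k). \]
Thus $H^i_{\mathfrak m}(k[\Delta])=0$ is equivalent to the vanishing of $\tilde{H}_{i-|W|-1}(\mathrm{lk}_\Delta W;k)$ for every face $W$; setting $j=i-|W|-1$, the range $i<d=\dim\Delta+1$ becomes $j<\dim\Delta-|W|$, so Cohen--Macaulayness is equivalent to $\tilde{H}_j(\mathrm{lk}_\Delta W;k)=0$ for all faces $W$ and all $j<\dim\Delta-|W|$. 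To recognize this as statement (b) one checks that a complex satisfying (b) is necessarily equidimensional, whence $\dim\mathrm{lk}_\Delta W=\dim\Delta-|W|$ and the two degree ranges coincide. Equidimensionality follows by induction on $\dim\Delta$: condition (b) restricts to each vertex link via the identity $\mathrm{lk}_{\mathrm{lk}_\Delta v}(G)=\mathrm{lk}_\Delta(\{v\}\cup G)$, so the links are equidimensional by induction, while the vanishing of $\tilde{H}_0(\Delta)$ keeps $\Delta$ connected and adjacency forces neighbouring vertices to have link-dimensions that agree, pinning them all to $\dim\Delta-1$.

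The equivalence (b)$\Leftrightarrow$(c) is then formal, resting on the same iterated-link identity together with the matching of dimensions $\dim\mathrm{lk}_{\mathrm{lk}_\Delta v}G=\dim\mathrm{lk}_\Delta(\{v\}\cup G)$. Granting (b), the case $F=\emptyset$ (where $\mathrm{lk}_\Delta\emptyset=\Delta$) gives the homological vanishing of $\Delta$ demanded by (c), and for each vertex $v$ the identity shows $\mathrm{lk}_\Delta v$ again satisfies (b)---apply (b) for $\Delta$ to the faces $\{v\}\cup G$---so $\mathrm{lk}_\Delta v$ is Cohen--Macaulay by (a)$\Leftrightarrow$(b). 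Conversely, granting (c), statement (b) at $F=\emptyset$ is precisely the first clause, while for nonempty $F$ one picks $v\in F$, rewrites $\mathrm{lk}_\Delta F=\mathrm{lk}_{\mathrm{lk}_\Delta v}(F\setminus\{v\})$, and applies (b) for the Cohen--Macaulay complex $\mathrm{lk}_\Delta v$. Because (a)$\Leftrightarrow$(b) has already been established for every complex, applying it to the smaller vertex links introduces no circularity.

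The genuine obstacle is Hochster's formula itself: its proof requires identifying, in each multidegree $a\le0$, the relevant piece of the \v{C}ech complex on $X_1,\dots,X_n$ computing $H^\bullet_{\mathfrak m}(k[\Delta])_a$ with the reduced (co)chain complex of the link $\mathrm{lk}_\Delta W$, after which everything reduces to the reindexing above. Since the paper already relies on \cite{BH98}, I would cite Hochster's formula from there and treat it as a black box, so that the whole argument collapses to the two translations; a self-contained account would instead reproduce that \v{C}ech-complex computation together with the equidimensionality induction, which is where essentially all of the technical weight lies.
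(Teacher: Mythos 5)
The paper does not actually prove this theorem: it is stated as a quoted result with the citation \cite{rei76} standing in for the proof, so there is no internal argument to compare yours against. Your proposal is the standard modern proof of Reisner's criterion (essentially Theorem 5.3.9 of \cite{BH98}, a reference the paper already uses), and its logical skeleton is sound: the characterization of depth by local cohomology, Hochster's formula translating the graded pieces $H^i_{\mathfrak{m}}(k[\Delta])_a$ into reduced homology of links, the reindexing $j=i-|W|-1$ turning the range $i<\dim\Delta+1$ into $j<\dim\Delta-|W|$, and then the purity argument needed to identify that range with the range $j<\dim\mathrm{lk}_\Delta W$ appearing in condition (b). You correctly flag that this last step is not a formality --- for a non-pure complex the two vanishing ranges genuinely differ --- and your induction (vertex links inherit (b) via $\mathrm{lk}_{\mathrm{lk}_\Delta v}(G)=\mathrm{lk}_\Delta(\{v\}\cup G)$, connectivity from $\tilde{H}_0(\Delta;k)=0$, and adjacent vertices forced to have equal link dimension) is the right way to establish equidimensionality. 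The derivation of (b)$\Leftrightarrow$(c) by iterated links is likewise correct, and your remark about non-circularity is apt since (a)$\Leftrightarrow$(b) is proved for all complexes before being applied to the links. The one caveat is that your argument is an outline rather than a self-contained proof: essentially all of the technical content lives inside Hochster's formula, which you cite as a black box. That is a defensible division of labor --- the paper black-boxes the entire theorem --- but what you have written is accurately described as ``Reisner's criterion granted Hochster's formula,'' not an independent proof of it.
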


\begin{remark}\label{1d-cm}
Using condition (c) of Theorem \ref{reisner}, one can show that a simplicial complex of dimension 1 is Cohen-Macaulay (over any field) if and only if it is connected.
Indeed, if $\Delta$ is $1$-dimensional and connected, then $\tilde{H}_0(\Delta;k)=0$ and $\dim\Delta=1$, so the first condition in (c) holds. 
Moreover, the link of any vertex is $0$-dimensional (a discrete set of points), and any $0$-dimensional complex is Cohen-Macaulay. 
Thus $\Delta$ satisfies (c). The converse follows immediately from (c) since $\tilde{H}_0(\Delta;k)$ must vanish.
\end{remark}

\begin{theorem}\label{cm}
Let $\Omega$ be a finite commutative ring. The complex $K(\Omega)$ is Cohen--Macaulay if and only if one of the following holds:
\begin{enumerate}
    \item $\Omega$ is not local, and $\Omega \cong F_1 \times F_2$ or $\Omega \cong \mathbb{Z}_2[X]/(X^2) \times F_2$, where $F_1, F_2$ are finite fields;
    \item $\Omega$ is a local ring and is a field;
    \item $\Omega$ is a local ring (not a field) with maximal ideal $\mathfrak{m}$ of nilpotency index $v$, and $|\mathfrak{m}^{v-1}| > 2$;
    \item $\Omega$ is a local ring (not a field) with $|\mathfrak{m}^{v-1}| = 2$, and $K(\Omega)$ is Cohen--Macaulay (a full classification in this case remains open).
\end{enumerate}
\end{theorem}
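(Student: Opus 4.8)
The engine is Reisner's criterion (Theorem \ref{reisner}), which I would combine with the homology computation of Theorem \ref{main} and the standard fact that a Cohen--Macaulay complex over a field is pure (all facets equidimensional). I would split the argument into the non-local case $\Omega\cong R_1\times\cdots\times R_k$ with $k\ge 2$ and the local case.

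\emph{Non-local case.} For $k=2$, Theorem \ref{main} together with the vanishing result $\tilde H_*(K(R))=0$ for local $R$ shows that $\tilde H_*(K(R_1\times R_2))$ is concentrated in degree $1$, since by Lemma \ref{lem} the factor $\tilde H_{n-1}(K_0(R_i))$ vanishes except at $n-1=0$. Hence if $\dim K(\Omega)\ge 2$ there is nonzero homology below the top dimension (or, when $\tilde H_1=0$, a purity failure), so condition (c) of Theorem \ref{reisner} fails; thus a necessary condition is $\dim K(\Omega)=1$, i.e.\ $\Gamma(\Omega)$ is triangle-free. I would then analyze triangles directly: a triangle forces either one factor to contain two distinct nonzero elements with product $0$ (which happens exactly when that factor is a non-field with $|\mathfrak m_i|>2$, using $t\in\mathfrak m_i^{v_i-1}\setminus\{0\}$) or both factors to be non-fields (exhibited by $(t_1,0),(0,t_2),(t_1,t_2)$ with $t_i\in\mathfrak m_i^{v_i-1}\setminus\{0\}$). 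Ruling these out leaves exactly: both factors fields, or one factor a field and the other a local ring of order $4$ (namely $\mathbb Z_4$ or $\mathbb Z_2[X]/(X^2)$, which give isomorphic complexes and so both ought to appear in the statement). Conversely, for these survivors $\dim K(\Omega)=1$ and $\Gamma(\Omega)$ is connected, so $K(\Omega)$ is Cohen--Macaulay by Remark \ref{1d-cm}. For $k\ge 3$ I would invoke purity: exhibiting two facets of different dimension---e.g.\ the clique built from a single nonzero socle element in each of the $k$ coordinates versus one that merges coordinates $1,2$ into a single vertex---shows $K(\Omega)$ is never pure, hence never Cohen--Macaulay.

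\emph{Local case.} Here $\tilde H_*(K(\Omega))=0$, so the first clause of Theorem \ref{reisner}(c) is automatic and everything is controlled by vertex links. The structural observation is that each nonzero socle element $z\in\mathfrak m^{v-1}\setminus\{0\}$ annihilates all of $\mathfrak m$, hence is a cone point of $K(\Omega)$; since these elements also pairwise annihilate, they span a full simplex $\sigma$ and one obtains a join decomposition $K(\Omega)\cong \sigma\,\triangledown\,\Delta(\mathfrak m\setminus\mathfrak m^{v-1})$. Using that a join is Cohen--Macaulay iff both factors are (provable from the K\"unneth/join formula of Theorem \ref{tool1} together with Reisner's criterion), and that $\sigma$ is Cohen--Macaulay, the problem reduces to the Cohen--Macaulayness of the residual complex $L:=\Delta(\mathfrak m\setminus\mathfrak m^{v-1})$. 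A field gives $K(\Omega)=\emptyset$ (Cohen--Macaulay, case 2), and $v=2$ gives $L=\emptyset$, so $K(\Omega)$ is a simplex (this covers case 3 with $v=2$ and the $|\mathfrak m|=2$ part of case 4), all Cohen--Macaulay.

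The main obstacle is precisely this residual complex $L=\Delta(\mathfrak m\setminus\mathfrak m^{v-1})$. The reduction shows $L$ depends only on the non-socle part of $\mathfrak m$ and is completely insensitive to $s=|\mathfrak m^{v-1}|-1$, so the dichotomy asserted in cases 3--4 (through whether $|\mathfrak m^{v-1}|>2$ or $=2$) cannot be read off this decomposition, and I expect this to be where the argument genuinely breaks. Indeed purity of $L$ can fail while $|\mathfrak m^{v-1}|>2$: for $\Omega=\mathbb F_3[X]/(X^4)$ one has $|\mathfrak m^{v-1}|=|\mathfrak m^3|=3>2$, yet each valuation-$1$ vertex has only the two socle elements as neighbors, so $\{X,X^3,2X^3\}$ is a maximal face of dimension $2$ while $\mathfrak m^2\setminus\{0\}$ is a face of dimension $7$; thus $K(\Omega)$ is not pure and not Cohen--Macaulay. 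I would therefore expect the real content of cases 3--4 to require a finer invariant controlling when $L$ is pure (equivalently Cohen--Macaulay), rather than the cardinality $|\mathfrak m^{v-1}|$, and I would flag case 3 as stated as the delicate---and apparently too strong---point to be reconciled, with the genuine borderline living inside this residual analysis.
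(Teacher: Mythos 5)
Your proposal is not a proof of the statement, but that is because the statement itself is false, and your analysis correctly exposes this. Your counterexample checks out: for $\Omega=\mathbb{F}_3[X]/(X^4)$ we have $v=4$ and $|\mathfrak{m}^{v-1}|=|\mathfrak{m}^3|=3>2$, so case 3 of the theorem asserts $K(\Omega)$ is Cohen--Macaulay. But adjacency in $\Gamma(\Omega)$ is ``valuation sum $\geq 4$,'' so $\mathfrak{m}^2\setminus\{0\}$ (eight vertices) is a facet of dimension $7$, while each of the $18$ valuation-one vertices $u$ has only $X^3,2X^3$ as neighbours, making $\{u,X^3,2X^3\}$ a facet of dimension $2$. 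Since Cohen--Macaulay complexes are pure, $K(\Omega)$ is not Cohen--Macaulay; equivalently, Reisner's criterion fails for the link of the face $\{X^3,2X^3\}$, which is the disjoint union of a $K_6$ and $18$ isolated points, disconnected of dimension $5$. The error in the paper's own proof of its Case 3 is exactly the one your decomposition diagnoses: it claims each vertex link ``is a cone, which implies it is CM.'' That implication is false; a cone $v\ast\Delta$ is Cohen--Macaulay if and only if $\Delta$ is, because $k[v\ast\Delta]\cong k[\Delta][x]$. Indeed in the example $\mathrm{lk}(X^3)$ is a cone with apex $2X^3$ yet is not CM. Your join decomposition $K(\Omega)=\sigma\triangledown L$ with $L=\Delta(\mathfrak{m}\setminus\mathfrak{m}^{v-1})$ makes the structural point precisely: Cohen--Macaulayness of $K(\Omega)$ is equivalent to that of $L$, an invariant which the cardinality $|\mathfrak{m}^{v-1}|$ does not control, so the dichotomy drawn between cases 3 and 4 cannot be correct. (For the same reason the paper's subsequent remark that $\mathbb{Z}_2[X]/(X^4)$ is CM is also wrong: there $\{X,X^3\}$ and $\{X^2,X^2+X^3,X^3\}$ are facets of different dimensions.)

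In the non-local case your route reaches the same conclusions as the paper but is more robust, and it catches a genuine omission in the statement. The paper's Case 1 ($k\geq 3$) argues that $\tilde{H}_1(K(\Omega))\neq 0$, which fails for $\Omega=\mathbb{Z}_2\times\mathbb{Z}_2\times\mathbb{Z}_2$: there $K(\Omega)$ is a triangle with three pendant edges, which is contractible (consistently, every exponent in Theorem \ref{main} vanishes since all $u_i=1$). Your purity argument covers such rings, though your specific pair of facets needs a small repair when some factors are non-fields (the ``one socle element per coordinate'' clique need not be maximal, since e.g.\ $(z_1,z_2,0,\ldots,0)$ can be added); a clean fix is to take any facet $C$ of $K_0(R_3\times\cdots\times R_k)$ and compare the two facets $\{(1,1,0,\ldots,0)\}\cup C$ and $\{(1,0,\ldots,0),(0,1,0,\ldots,0)\}\cup C$, whose dimensions differ by one. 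You are also right that $\mathbb{Z}_4\times F_2$ must appear alongside $\mathbb{Z}_2[X]/(X^2)\times F_2$ in case 1: the paper's argument that $|\mathfrak{m}_1|=2$ forces $R_1\cong\mathbb{Z}_2[X]/(X^2)$ overlooks $\mathbb{Z}_4$, which satisfies all the derived conditions ($\mathfrak{m}_1^2=0$, residue field $\mathbb{F}_2$) and yields an isomorphic complex.

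In summary: your $k=2$ analysis (triangle-freeness plus connectivity, via Remark \ref{1d-cm}) and your local-case reduction are sound, your counterexample refutes case 3 as stated, and the genuine gap lies in the paper's proof, not in your proposal. The one point to tighten is the purity witness for $k\geq 3$ noted above.
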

\begin{proof}
We proceed by considering the value of $k$ in the decomposition $\Omega\cong R_1 \times R_2 \times\cdots\times R_k$.\\
Case 1: If $k\geq 3$, then $\dim K(\Omega)\geq k-1>1$ and $\tilde{H}_1(K(\Omega))\neq0$. By condition (c) of Theorem \ref{reisner}, the complex $K(\Omega)$ is not Cohen-Macaulay. \\
Case 2: Let $R_i$ denote a local ring that is not a field, and let $F_i$ denote a field. If $k=2$, we can classify the type of $\Omega$ into the following three cases: $F_1 \times F_2$, $R_1 \times R_2$, and $R_1 \times F_2$. If $\Omega\cong F_1\times F_2$, then $K(\Omega)$ is a connected 1-dimensional complex. By Remark \ref{1d-cm}, $K(\Omega)$ is Cohen-Macaulay. Now we discuss the case $\Omega\cong R_1\times R_2$. Let $\mathfrak{m}_i$ be the maximal ideal of $R_i$. Let $x\in\mathfrak{m}_1^{v_1-1}\setminus\{0\}$ and $y\in\mathfrak{m}_2^{v_2-1}\setminus\{0\}$. Then the complex $K(\Omega)$ has a 2-dimensional simplex $\{(x,0),(0,y),(x,y)\}$ and $\dim K(\Omega)\geq 2$. Note that $\tilde{H}_1(K(\Omega))=\mathbb{Z}^{u_1u_2}\neq0$, hence the complex $K(\Omega)$ is not Cohen-Macaulay. Finally we suppose $\Omega\cong R_1\times F_2$. The link of the vertex $(0,1)$ is $K_0(R_1)$. If $|\mathfrak{m}_1|\geq 3$, then $\dim K_0(R_1)>0$. Since $K_0(R_1)$ is not connected, it is not Cohen-Macaulay. If $|\mathfrak{m}_1|=2$, we first show that $R_1$ must be isomorphic to $\mathbb{Z}_2[X]/(X^2)$. Let $\mathfrak{m}_1=\{0,t\}$ and $R_1/\mathfrak{m}_1=\mathbb{F}_q$. The equation $t^2=t$ implies that $t(1-t)=0$. Since $1-t$ is a unit, we must have $t=0$, which is a contradiction. Hence $t^2=0$ and $\mathfrak{m}_1^2=0$. With this condition, the ideal $\mathfrak{m}_1$ can be viewed as a vector space over $R_1/\mathfrak{m}_1=\mathbb{F}_q$. This shows that $q\leq2$, and therefore $q=2$, which implies that $R_1\cong\mathbb{Z}_2[X]/(X^2)$. Note that the complex $K(\mathbb{Z}_2[X]/(X^2)\times F_2)$ is 1-dimensional and connected, and it is Cohen-Macaulay. \\
Case 3: We will show that the link of every vertex in $K(R_1)$ is a cone, which implies it is CM. Let $z \in Z(R_1)^*$ be an arbitrary vertex. Consider the elements $x, y \in \mathfrak{m}_1^{v_1-1} \setminus {0}$ with $x \neq y$. We claim that at least one of $x$ and $y$ is in the link of $z$, i.e., adjacent to $z$. Since $x, y, z \in \mathfrak{m}_1$, we have $xz, yz \in \mathfrak{m}_1^{v_1} = 0$. Thus, both $x$ and $y$ are adjacent to $z$ in the graph $\Gamma(R_1)$. However, for one of them to be in the link of $z$ in the complex $K(R_1)$, it must be a vertex distinct from $z$. If $z \neq x$ and $z \neq y$, then both $x$ and $y$ are in $\mathrm{lk}(z)$. If $z = x$, then $y$ is distinct from $z$ and is in $\mathrm{lk}(z)$. Similarly, if $z = y$, then $x$ is in $\mathrm{lk}(z)$. In all cases, there exists a vertex $p \in {x, y}$ such that $p \neq z$ and $p \in \mathrm{lk}(z)$. Now, since $p \in \mathfrak{m}_1^{v_1-1}$, we have $p w = 0$ for all $w \in \mathfrak{m}_1$, meaning $p$ is adjacent to every vertex in $K(R_1)$. In particular, $p$ is adjacent to every vertex in $\mathrm{lk}(z)$. Therefore, $\mathrm{lk}(z)$ is a cone with apex $p$, and hence is Cohen-Macaulay. By Theorem \ref{reisner}(c), since all vertex links are CM and $\tilde{H}_i(K(R_1)) = 0$ for $i < \dim K(R_1)$ (as $K(R_1)$ is itself a cone with apex $x$), the complex $K(R_1)$ is Cohen-Macaulay.
\end{proof}

\begin{remark}
In case 4, the complex $K(\Omega)$ can be either Cohen--Macaulay or not. For example:
\begin{itemize}
    \item $K(\Omega)$ is Cohen--Macaulay for $\Omega = \mathbb{Z}_2[X]/(X^3)$ and $\mathbb{Z}_2[X]/(X^4)$;
    \item $K(\Omega)$ is not Cohen--Macaulay for $\Omega = \mathbb{Z}_2[X]/(X^n)$ with $n \ge 5$;
    \item $K(\Omega)$ is Cohen--Macaulay for $\Omega = \mathbb{Z}_2[X,Y]/(X^2,Y^2)$.
\end{itemize}
\end{remark}

\subsection{$K(\Omega)$ Cannot Be a Compact Surface}

We have shown that for any finite commutative ring $\Omega$, the complex $K(\Omega)$ is not a simplicial decomposition of the Klein bottle or the real projective plane. Using Theorem \ref{main}, we can study if $K(\Omega)$ is a simplicial decomposition of a compact surface.

\begin{theorem}
For any commutative ring $\Omega$, the complex $K(\Omega)$ is not a simplicial decomposition of a compact real surface.
\end{theorem}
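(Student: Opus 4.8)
The plan is to argue by contradiction: suppose the geometric realization of $K(\Omega)$ is homeomorphic to a closed surface $M$. Here I read ``compact real surface'' as a compact $2$-manifold \emph{without boundary}; this restriction is essential, since allowing boundary would make the statement false---for example, $K(\mathbb{F}_4[X]/(X^2))$ consists of the three nonzero zero-divisors $bX$ ($b\in\mathbb{F}_4^*$), which are pairwise adjacent, so it is a single solid $2$-simplex, i.e.\ a disk $D^2$. The two facts I would exploit are: (i) the integral homology of $K(\Omega)$ is free abelian and vanishes in degrees $\geq k$ (Theorem \ref{main} and Corollary \ref{cor4}); and (ii) in any triangulation of a closed surface the link of every vertex is a single triangulated $1$-sphere, hence connected.

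First I would reduce to the case $k\geq 3$. Every closed surface $M$ carries a $\mathbb{Z}/2$-fundamental class, so $H_2(M;\mathbb{Z}/2)\cong\mathbb{Z}/2\neq 0$. Since $\tilde{H}_*(K(\Omega);\mathbb{Z})$ is free by Theorem \ref{main}, the universal coefficient theorem gives $H_2(K(\Omega);\mathbb{Z}/2)\cong H_2(K(\Omega);\mathbb{Z})\otimes\mathbb{Z}/2$ with vanishing $\mathrm{Tor}$ term, whence $\tilde{H}_2(K(\Omega);\mathbb{Z})\neq 0$; by Corollary \ref{cor4} this forces $k\geq 3$. Equivalently and more elementarily, freeness immediately excludes all non-orientable closed surfaces (whose $H_1$ has $2$-torsion), while an orientable closed surface has $\tilde{H}_2\cong\mathbb{Z}\neq 0$, again giving $k\geq 3$.

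The heart of the argument is then a single vertex-link computation. Writing elements of $\Omega$ as $k$-tuples, consider the vertex $\epsilon=(1,0,\ldots,0)$, which is a nonzero zero-divisor because $k\geq 2$. A vertex $w$ is adjacent to $\epsilon$ precisely when its first coordinate vanishes, so the neighbors of $\epsilon$ correspond bijectively to the nonzero elements of $R':=R_2\times\cdots\times R_k$, and two neighbors are adjacent iff their product vanishes in $R'$. As $K(\Omega)$ is a clique complex, its link at $\epsilon$ is the clique complex of this induced graph, that is, $\mathrm{lk}(\epsilon)\cong K_0(R')$. Since $R'$ is a product of $k-1\geq 2$ nonzero rings it is not a field, so Lemma \ref{lem} gives $\tilde{H}_0(K_0(R'))\cong\mathbb{Z}^{|U(R')|}$ with $|U(R')|=u_2\cdots u_k\geq 1$. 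Thus $\mathrm{lk}(\epsilon)$ is disconnected, contradicting fact (ii). Hence $K(\Omega)$ is never a closed surface.

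The hard part will be making the reduction to $k\geq 3$ fully uniform across orientable and non-orientable $M$: over $\mathbb{Z}$ the clean vanishing $\tilde{H}_2\cong\mathbb{Z}$ holds only in the orientable case, so one must either pass to $\mathbb{Z}/2$ coefficients (using freeness to kill the $\mathrm{Tor}$ contribution) or dispatch the non-orientable surfaces separately through the torsion of $H_1$. A secondary point demanding care is the rigorous identification $\mathrm{lk}(\epsilon)\cong K_0(R')$ and the verification that surfaces with boundary lie genuinely outside the statement; the disk $K(\mathbb{F}_4[X]/(X^2))$ shows that without the no-boundary hypothesis the theorem fails, so the intended scope must be closed surfaces.
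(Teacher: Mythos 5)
Your proof is correct, but it takes a genuinely different route from the paper's at the decisive step. Both arguments start the same way: freeness of $\tilde{H}_*(K(\Omega))$ (Theorem \ref{main}) excludes the non-orientable closed surfaces (whose $H_1$ has $2$-torsion), and Corollary \ref{cor4} together with the non-vanishing of $H_2$ of a closed surface forces $k\geq 3$; your $\mathbb{Z}/2$-coefficient formulation via the universal coefficient theorem treats the orientable and non-orientable cases uniformly, which is a clean packaging. From there the paper stays global and homological: it applies Theorem \ref{main} again to show $k=3$, that no factor may be $\mathbb{Z}_2$ or a non-field, hence $\Omega\cong F_1\times F_2\times F_3$ with $u_1=u_2=u_3=2$, i.e.\ $\Omega\cong\mathbb{Z}_3\times\mathbb{Z}_3\times\mathbb{Z}_3$, and then eliminates this final case by an explicit geometric inspection (``an octahedral with 12 1-dimensional handles, which is not $\Sigma_6$''). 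Your proof instead goes local: for $k\geq 3$ the link of the vertex $(1,0,\ldots,0)$ in the flag complex $K(\Omega)$ is $K_0(R_2\times\cdots\times R_k)$, which by Lemma \ref{lem} has $\tilde{H}_0\cong\mathbb{Z}^{u_2\cdots u_k}\neq 0$ (the units of $R_2\times\cdots\times R_k$ sit as isolated vertices), whereas every vertex link in a triangulated closed surface is a single circle, hence connected. This kills all $k\geq 3$ cases at once, with no case analysis and, importantly, no appeal to the unproved ``octahedron with handles'' claim, which is the weakest point of the paper's proof --- homologically $\mathbb{Z}_3\times\mathbb{Z}_3\times\mathbb{Z}_3$ is indistinguishable from $\Sigma_6$, so the paper genuinely needs that geometric assertion, while you do not. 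Your side remark that $K(\mathbb{F}_4[X]/(X^2))$ is a solid $2$-simplex, so the theorem must be read as excluding surfaces with boundary, is also a real clarification of scope that the paper leaves implicit.
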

\begin{proof}
By Theorem \ref{main}, the $K(\Omega)$ can never be a nonorientable surface. If $K(\Omega)$ is the compact orientable surface $\Sigma_g$ with genus $g$, then we have $\tilde{H}_2(K(\Omega))=\mathbb{Z}$, $\tilde{H}_1(K(\Omega))=\mathbb{Z}^{2g}$ and $\dim\ K(\Omega)=2$. We consider the value of $k$ in the decomposition $\Omega\cong R_1 \times R_2 \times\cdots\times R_k$.\par

If $k\geq 4$, then $\dim\ K(\Omega)\geq k-1\geq 3$. If $k=1$ or $k=2$, we have $\tilde{H}_2(K(\Omega))=0$. The only possibility is $k=3$. When some $R_k$ is $\mathbb{Z}_2$, the homology group $\tilde{H}_2(K(\Omega))$ is 0. If some $R_k$ is not a field, then $b_2(K(\Omega))>1$. The remaining case is $\Omega\cong F_1\times F_2\times F_3$. Since $\tilde{H}_2(K(\Omega))=\mathbb{Z}^{(u_1-1)(u_2-1)(u_3-1)}$, the condition $\tilde{H}_2(K(\Omega))=\mathbb{Z}$ implies that $u_1=u_2=u_3=2$. This shows that if $K(\Omega)$ is a compact surface, it must be $\Sigma_6$, since $\tilde{H}_1(K(\mathbb{Z}_3\times\mathbb{Z}_3\times\mathbb{Z}_3))=\mathbb{Z}^{12}$. It is easy to check that $K(\mathbb{Z}_3\times\mathbb{Z}_3\times\mathbb{Z}_3)$ is an octahedral with 12 1-dimensional handles, which is not $\Sigma_6$.
\end{proof}

\section{Questions}

The main difficulty of extending our Theorem \ref{main} to finite semigroups is that, for a general semigroup $S$, there is no easy  decomposition theorem as in the ring case $\Omega\cong R_1 \times R_2 \times\cdots\times R_k$. In semigroup theory, the direct product is replaced by the structure of a semilattice of semigroups, but the structure of semilattice is also complicated. For finite noncommutative rings, the situation is not as bad as in the case of semigroups. We may study the following question:

\begin{question}
How to compute the homology group $K(\Omega)$ if $\Omega$ is a finite noncommutative ring?
\end{question}

In Theorem \ref{cm}, there is a case remains open.

\begin{question}
If $\Omega$ is a finite commutative local ring (not a field) with maximal ideal $\mathfrak{m}$ of nilpotency index $v$ and $|\mathfrak{m}^{v-1}| = 2$, find the necessary and sufficient condition that $K(\Omega)$ is Cohen-Macaulay.
\end{question}

\end{document}